\documentclass[reqno]{amsart}
\usepackage{amssymb,enumerate,mathrsfs,curves,amscd}

\numberwithin{equation}{section}

\newtheorem{theorem}[equation]{Theorem}

\newtheorem{lemma}[equation]{Lemma}
\newtheorem{corollary}[equation]{Corollary}

\theoremstyle{definition}

\newtheorem{example}[equation]{Example}

\def\C{\mathbb C}
\def\Dom{\mathcal{D}}

\def\K{\mathcal K}

\def\L{\mathscr L}
\def\N{\mathbb N}
\def\R{\mathbb R}

\def\cev{\hspace*{0.2ex}{}^c\hspace*{-0.2ex} {\mathrm{ev}}}
\def\cg{\hspace*{0.2ex}{}^c\hspace*{-0.2ex} g}
\def\cpi{\hspace*{0.1ex}{}^c\hspace*{-0.1ex} \pi}
\def\csym{\,{}^c\!\sym}

\def\cT{\,{}^c T}

\def\eps{\varepsilon}
\def\Mbar{\overline{M}}
\def\m{\mathfrak m}
\def\minus{\backslash}

\def\open#1{\smash[t]{\overset{{}_{\circ}}{#1}{}}}
\def\reg{\textup{reg}}

\def\sing{\textup{sing}}

\def\Vee{{\mathcal V}}

\DeclareMathOperator{\Ind}{ind}

\DeclareMathOperator{\Diff}{Diff}
\DeclareMathOperator{\spec}{spec}
\DeclareMathOperator{\sym}{ \sigma\!\!\!\sigma}
\DeclareMathOperator{\Span}{span}
\DeclareMathOperator{\Eig}{{\mathfrak E}{\mathfrak i}{\mathfrak g}}

\begin{document}
\title[Completeness of generalized eigenfunctions]{On the completeness of
generalized eigenfunctions of elliptic cone operators} 
\thanks{Partially supported by the National Science Foundation
under grant DMS 0901202.}
\author{Thomas Krainer}
\address{Penn State Altoona\\ 3000 Ivyside Park \\ Altoona, PA 16601-3760}
\email{krainer@psu.edu}
\begin{abstract}
We show the completeness of the system of generalized eigenfunctions of
closed extensions of elliptic cone operators under suitable conditions
on the symbols.
\end{abstract}

\subjclass[2000]{Primary: 58J50; Secondary: 35P10, 58J05}
\keywords{Manifolds with conical singularities, resolvents, spectral theory}

\maketitle

\section{Introduction}

\noindent
The purpose of this note is to extend the theorem about completeness of the system
of generalized eigenfunctions of elliptic operators on manifolds with conical
singularities of Egorov, Kondratiev, and Schulze
\cite{EgorovKondratievSchulze1,EgorovKondratievSchulze2} to the general case.
While it is implicit in their presentation, it is, however, important to note
that their result is applicable only for the minimal extension of the operator.
This leaves out many important cases, including (nonselfadjoint)
realizations of Laplacians.
We will present two simple examples in Section~\ref{MainTheoremExamples} which illustrate
the relevancy of this observation.

Like Egorov, Kondratiev, and Schulze, we will follow Agmon's
approach \cite{Agmon} towards proving this result. 
This approach is based on a purely functional analytic theorem of Dunford and
Schwartz \cite[Chapter XI.9 and XI.6]{DunfordSchwartz}, see
Section~\ref{FunctAnalResult}, which reduces the task of proving completeness
of generalized eigenfunctions to showing that the embedding of the domain
of the operator into the Hilbert space is of Schatten class, and to showing
that the operator admits sufficiently many rays of minimal growth.
Agranovich uses the same approach in \cite[Section 6.4]{Agranovich1} and
\cite[Section 9.3]{Agranovich2} to address the completeness problem
for elliptic operators on smooth manifolds.

Rays of minimal growth for elliptic cone operators equipped with general domains
have been the subject of our earlier work \cite{GKM1,GKM2,GKM3} in collaboration
with J.~Gil and G.~Mendoza (for the boundaryless case), and \cite{KrainBVPCone}
(for the case of realizations subject to boundary conditions).
With these results at hand, it remains to prove that the embedding of the domain
of the operator into the Hilbert space is of Schatten class. To do this, we will
employ recent results of Buzano and Toft \cite{BuzanoToft,Toft} as well as the
explicit descriptions of domains of elliptic cone operators from \cite{Le97,GiMe01}
(in the boundaryless case) and \cite{CorSchroSei,KrainBVPCone} (for realizations
subject to boundary conditions).

The focus of Agmon's original paper \cite{Agmon} are elliptic boundary value
problems on smooth manifolds. He uses Fourier series to show that the embeddings
of the Sobolev spaces are of Schatten class. This argument was adapted by Egorov,
Kondratiev, and Schulze in \cite{EgorovKondratievSchulze1,EgorovKondratievSchulze2}.
In \cite{Agranovich1,Agranovich2}, Agranovich uses a different elegant argument to show
that the embedding of the domain of an operator $A$ is of Schatten class; this argument
is based on the Weyl asymptotics of the eigenvalues of the operator
$(A-\lambda_0)(A-\lambda_0)^*$ for a suitable $\lambda_0$ in the resolvent set of $A$.
That semibounded elliptic cone operators in the boundaryless case exhibit
Weyl asymptotics has been shown by Lesch \cite{Le97}. In the boundaryless case,
we could therefore follow Agranovich's argument to obtain what is needed to
prove completeness of the generalized eigenfunctions.
However, our approach gives a more general embedding result which readily applies
to all kinds of realizations of elliptic cone operators on conic manifolds
with or without boundary.

\medskip

\noindent
The structure of the paper is as follows:

In Section~\ref{FunctAnalResult} we review the functional analytic background
and the result of Dunford and Schwartz \cite[Chapter XI.9 and XI.6]{DunfordSchwartz}.

Section~\ref{ConeSobolevSpacesEmbeddings} is devoted to weighted cone Sobolev
spaces \cite{RBM2,SchulzeNH} and the embedding result that we need.

Section~\ref{ConeOperators} summarizes basics about elliptic cone operators, and
we review the results about rays of minimal growth from
\cite{GKM1,GKM2,GKM3,KrainBVPCone}.

We conclude this work in Section~\ref{MainTheoremExamples} with the main
theorems about the completeness of generalized eigenfunctions for general realizations
of elliptic cone operators, and the discussion of two simple examples to illustrate the
results.

\medskip

\noindent
The case of cone operators represents the simplest situation of elliptic operators
on incomplete Riemannian manifolds with corners. From this perspective, this work is
the first step towards addressing similar questions for this more general case.
The observations made in the present work will impact such future investigations.
As the examples in Section~\ref{MainTheoremExamples} show, it cannot be expected
that the scales of weighted Sobolev spaces that are considered
in the existing literature on elliptic operators on incomplete manifolds with
corners will be immediately related to the functional analytic domains of an
elliptic operator. Much work still needs to be done to describe these domains.
On the other hand, the present paper underscores that the proof of the
completeness of generalized eigenfunctions is based only on very few principles.

\medskip

\noindent
I would like to thank Juan Gil for several interesting discussions.

\section{Functional analytic background}\label{FunctAnalResult}

\noindent
Let $H$ be a complex Hilbert space, and let
$$
A : \Dom \subset H \to H
$$
be a closed, densely defined operator acting in $H$. The domain $\Dom$ is equipped with
the graph norm. Having realizations of elliptic operators in mind, we usually
write $A_{\Dom}$ to emphasize that $A$ acts in $H$ with domain $\Dom$.

Recall that a vector $0 \neq u \in H$ is called a \emph{generalized eigenvector}
of $A_{\Dom}$ associated with the eigenvalue $\lambda_0 \in \C$ if
$(A_{\Dom} - \lambda_0)^ku = 0$ for some $k \geq 1$. This entails of course that
$u$ is in the domain of the $k$-th power of $A_{\Dom}$.
Let $\Eig(A_{\Dom})$ denote the linear span of all generalized eigenvectors of
$A_{\Dom}$.

The statement that the system of generalized eigenvectors is complete
in $H$ means that $\Eig(A_{\Dom})$ is dense in $H$.

\begin{theorem}[{\cite[Corollary XI.9.31]{DunfordSchwartz}}]\label{AbstractResult}
Suppose the embedding $\Dom \hookrightarrow H$ belongs to the Schatten
class ${\mathfrak S}_p$ for some $0 < p < \infty$.
Moreover, let there be rays
$$
\Gamma_j = \{re^{i\theta_j};\; r \geq 0\}, \quad j=1,\ldots,J,
$$
in the complex plane that are rays of minimal growth for the operator $A_{\Dom}$,
and such that all angles enclosed by any two adjacent rays are $\leq \pi/p$.

Then the system of generalized eigenvectors of $A_{\Dom}$ is complete in $H$.
\end{theorem}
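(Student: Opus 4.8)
The plan is to follow the functional-analytic scheme of Agmon \cite{Agmon} and Dunford--Schwartz \cite{DunfordSchwartz}. First I would note that, because the embedding $\Dom\embed H$ lies in $\mathfrak S_p$, it is compact; hence $A_{\Dom}$ has compact resolvent, so $\spec(A_{\Dom})$ is discrete, consists of eigenvalues of finite algebraic multiplicity, and near each eigenvalue $\mu$ the resolvent $(A_{\Dom}-\lambda)^{-1}$ has a finite principal part whose coefficients are finite-rank operators mapping into the generalized eigenspace at $\mu$. It therefore suffices to show that every $f\in H$ with $f\perp\Eig(A_{\Dom})$ vanishes. Fixing $\lambda_0$ in the resolvent set and setting $T=(A_{\Dom}-\lambda_0)^{-1}$, we have $T\in\mathfrak S_p$ since $T$ factors as the bounded map $H\to\Dom$ followed by the embedding. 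For each $g\in H$ I would then study the a priori meromorphic function
\[
F_g(\lambda)=\bigl((A_{\Dom}-\lambda)^{-1}g,\,f\bigr),
\]
whose poles lie in $\spec(A_{\Dom})$; pairing with $f$ annihilates the principal part at every $\mu$ (its coefficients land in $\Eig(A_{\Dom})$, which is orthogonal to $f$), so $F_g$ is in fact entire.

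The next step is to control the growth of $F_g$. Writing $(A_{\Dom}-\lambda)^{-1}=(I-(\lambda-\lambda_0)T)^{-1}T$ and introducing the regularized Fredholm determinant $d(\lambda)=\det_{\lceil p\rceil}\!\bigl(I-(\lambda-\lambda_0)T\bigr)$, one has that $d(\lambda)(A_{\Dom}-\lambda)^{-1}$ is an entire operator-valued function of order $\le p$, and since $F_g$ is already entire it follows that $F_g$ itself is entire of order $\le p$. On each ray $\Gamma_j$ of minimal growth, $\|(A_{\Dom}-\lambda)^{-1}\|=O(1/|\lambda|)$ as $|\lambda|\to\infty$, so $|F_g(\lambda)|\le\|g\|\,\|f\|\cdot O(1/|\lambda|)\to0$ along $\Gamma_j$. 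The $J$ rays cut $\C$ into $J$ sectors, each of opening $\le\pi/p$; a Phragm\'en--Lindel\"of argument in each sector then shows that $F_g$ is bounded on the closed sector, hence $F_g$ is bounded on $\C$, hence constant by Liouville's theorem, and the constant is $0$ because $F_g\to0$ along the rays. Thus $\bigl((A_{\Dom}-\lambda_0)^{-1}g,f\bigr)=0$ for all $g\in H$; since the range of $(A_{\Dom}-\lambda_0)^{-1}$ is the dense subspace $\Dom$, we conclude $f=0$, which is the asserted completeness.

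The hard part will be the Phragm\'en--Lindel\"of step in the \emph{critical} case, when a sector has opening exactly $\pi/p$: here it is not enough to know that $F_g$ has order $\le p$; one needs quantitative Carleman-type bounds showing that $F_g$ has the correct minimal type in that sector, which in turn rests on sharp lower bounds for $|d(\lambda)|$ off its zeros and on using that $T\in\mathfrak S_p$ rather than merely $\mathfrak S_{p+\varepsilon}$. This is precisely the technical heart of \cite[Chapter~XI.9 and XI.6]{DunfordSchwartz}, and rather than reproducing those estimates I would invoke their result directly, noting that the two hypotheses of the theorem --- the Schatten-class embedding and the existence of enough rays of minimal growth --- are exactly the inputs their framework demands.
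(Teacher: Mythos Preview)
The paper does not give its own proof of this statement: Theorem~\ref{AbstractResult} is stated with attribution to \cite[Corollary~XI.9.31]{DunfordSchwartz} and then used as a black box, so there is no argument in the paper to compare your proposal against. Your sketch is the standard Agmon/Dunford--Schwartz route (entire function $F_g$ via orthogonality to the principal parts, order control through regularized determinants, Phragm\'en--Lindel\"of on the sectors), and it is correct as an outline; your closing remark that the critical-angle case is the technical heart and that you would ultimately invoke \cite{DunfordSchwartz} is exactly what the paper itself does.
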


Recall that a ray $\Gamma = \{re^{i\theta};\; r \geq 0\} \subset \C$ is called
a \emph{ray of minimal growth} or a \emph{ray of maximal decay} for $A$ if
$$
A - \lambda : \Dom \to H
$$
is invertible for $\lambda \in \Gamma$ with $|\lambda| > 0$ sufficiently large, and
if the resolvent satisfies the estimate
$$
\|(A_{\Dom}-\lambda)^{-1}\|_{\L(H)} = O\bigl(|\lambda|^{-1}\bigr)
$$
as $|\lambda| \to \infty$ in $\Gamma$.

Moreover, for Hilbert spaces $E$ and $F$, ${\mathfrak S}_p$ is the space of all
$T \in \L(E,F)$ such that $\sum_{j=0}^{\infty}\alpha_j(T)^p < \infty$, where
$$
\alpha_j(T) = \inf\{\|T-G\|_{\L(E,F)};\; G \in \L(E,F),\; \dim R(G) \leq j\}
$$
is the $j$-th approximation number of $T$.

\medskip

\noindent
Observe that if $\Gamma$ is a ray of minimal growth, then there is a sector 
$\Lambda$ with $\open\Lambda \neq \emptyset$ and central axis $\Gamma$ such that all
rays in $\Lambda$ are rays of minimal growth for $A_{\Dom}$. This implies that
in Theorem~\ref{AbstractResult} above we can weaken the assumption to only
require that the embedding $\Dom \hookrightarrow H$ belongs to ${\mathfrak S}^+_p$,
where
\begin{equation}\label{Spplus}
{\mathfrak S}^+_p = \bigcap\limits_{q > p} {\mathfrak S}_q.
\end{equation}
Note that ${\mathfrak S}_i \subset {\mathfrak S}_j$ for $0 < i < j < \infty$.
This observation is rather useful when dealing with elliptic operators since
the embeddings of domains typically belong to ${\mathfrak S}^+_p$, where $p > 0$
depends on the order of the operator and the dimension of the underlying space
(see below).

\medskip

If the operator $A_{\Dom}$ has nonempty resolvent set $\varrho(A_{\Dom})$
as is assumed in Theorem~\ref{AbstractResult}, the condition that
the embedding $\Dom \hookrightarrow H$ belongs to the Schatten
class ${\mathfrak S}_p$ for some $0 < p < \infty$ is equivalent to
requiring that the resolvent $T = (A_{\Dom}-\lambda_0)^{-1} : H \to H$ belongs
to ${\mathfrak S}_p$ for some $\lambda_0 \in \varrho(A_{\Dom})$.
Recall that this means that the nonzero eigenvalues (counting multiplicities)
$\lambda_0(\sqrt{T^*T}) \geq \lambda_1(\sqrt{T^*T}) \geq \ldots > 0$ of
$\sqrt{T^*T}$ are $p$-summable, i.e.,
$\sum_{j=0}^{\infty}\lambda_j(\sqrt{T^*T})^p < \infty$.

In view of the identity $\sqrt{T^*T} =
\bigl[(A_{\Dom}-\lambda_0)(A_{\Dom}-\lambda_0)^*\bigr]^{-1/2}$
and the spectral theorem for selfadjoint operators, we conclude that if
$A_{\Dom}$ has compact resolvent and the eigenvalues
$0 < \mu_0 \leq \mu_1 \leq \ldots$ of $(A-\lambda_0)(A-\lambda_0)^*$ (counting
multiplicities) obey Weyl's law $\mu_j \sim \textup{Const}\cdot j^{\frac{2m}{n}}$
as $j \to \infty$, then $T$ belongs to ${\mathfrak S}^+_{n/m}$.
Here $m,n > 0$, and in applications to elliptic operators $m$ is the order of $A$
and $n$ the dimension of the underlying space. This is Agranovich's argument
from \cite{Agranovich1,Agranovich2} to prove that the embeddings of domains of
elliptic operators on smooth compact manifolds are of Schatten class.
As already mentioned in the introduction, we will follow in this paper a
different approach for realizations of elliptic cone operators.

\section{Embeddings of weighted cone Sobolev spaces}\label{ConeSobolevSpacesEmbeddings}

\noindent
We begin with a brief review of the definition of the scale of weighted $b$-Sobolev
spaces. More details can be found in \cite{RBM2,SchulzeNH}.

Let $\Mbar$ be a smooth, compact $n$-manifold with boundary $\partial\Mbar$,
and let $x \in C^{\infty}(\Mbar)$ be a defining function for $\partial\Mbar$.
Recall that this means that $x \geq 0$ on $\Mbar$, $\partial\Mbar = \{x = 0\}$,
and $dx \neq 0$ on $\partial\Mbar$.
By $L^2_b(\Mbar)$ we denote the $L^2$-space with respect to any $b$-density $\m$
on $\Mbar$. Recall that $\m$ is a $b$-density if $x\m$ is a smooth, everywhere
positive density on $\Mbar$.
The $b$-Sobolev space of smoothness $s \in \N_0$ is defined as
$$
H^s_b(\Mbar) = \{u \in {\mathcal D}'(\Mbar);\; Pu \in L^2_b(\Mbar) \textup{ for all }
P \in \Diff^m_b(\Mbar),\; m \leq s\}.
$$
Recall that $\Diff^m_b(\Mbar)$ is the space of $b$-differential operators of order
$m$, i.e., the operators of order $m$ in the enveloping algebra of differential
operators generated by $C^{\infty}(\Mbar)$ and the Lie algebra $\Vee_b$ of smooth
vector fields on $\Mbar$ that are tangential to the boundary.
For general $s \in \R$ the space $H^s_b(\Mbar)$ is defined by interpolation and
duality.
More generally, if $E$ is a (Hermitian) vector bundle on $\Mbar$, let
$x^{\gamma}H^s_b(\Mbar;E)$ be the weighted $b$-Sobolev space of sections of $E$
of regularity $s \in \R$.

Our first goal in this section is the following theorem.

\begin{theorem}\label{EmbeddingbCone}
The embedding
$$
x^{\gamma}H^s_b(\Mbar;E) \hookrightarrow x^{\gamma'}H^{s'}_b(\Mbar;E)
$$
belongs to the Schatten class ${\mathfrak S}_p$, $0 < p < \infty$, for any
$\gamma > \gamma'$ and $s > s' + n/p$.
\end{theorem}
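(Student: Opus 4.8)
The plan is to reduce the claim to a known Schatten-class estimate for the weighted Sobolev spaces on a closed manifold via a dyadic decomposition in the boundary defining function $x$. First, fix a collar neighborhood $[0,1)_x \times \partial\Mbar$ of $\partial\Mbar$ and a partition of unity subordinate to the cover of $\Mbar$ by this collar and the interior $\open\Mbar$. On the interior piece the embedding $H^s_b \hookrightarrow H^{s'}_b$ is (locally) the standard Sobolev embedding on a closed manifold, which belongs to ${\mathfrak S}_p$ precisely when $s - s' > n/p$; this is the classical fact (used by Agmon and recalled in the introduction) that I would quote from \cite{Agranovich1} or prove by Fourier series. So the real content is the collar, where the $b$-structure $x\partial_x, \partial_{y}$ degenerates and the weight $x^\gamma$ versus $x^{\gamma'}$ must be exploited.

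On the collar I would decompose dyadically: let $\chi \in C^\infty_c((1/2,2))$ with $\sum_{k\ge 0}\chi(2^k x)^2 \equiv 1$ on $(0,1)$, set $\chi_k(x) = \chi(2^k x)$, and write the embedding as $\sum_k \iota_k$ where $\iota_k = M_{\chi_k}\circ\iota\circ M_{\chi_k}$ localizes to the shell $\{2^{-k-1} < x < 2^{-k+1}\}$. On the $k$-th shell the change of variables $x = 2^{-k}t$, $t \in (1/2,2)$, turns the $b$-vector fields $x\partial_x$ into $t\partial_t$ with no $k$-dependence, turns $\partial_y$ into $\partial_y$, and converts the weight into an overall factor: $M_{\chi_k}$ maps $x^\gamma H^s_b$ isometrically (up to uniform constants) onto $2^{-k\gamma}$ times a fixed Sobolev space on the compact piece $(1/2,2)\times\partial\Mbar$, and similarly on the target with exponent $\gamma'$. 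Hence $\iota_k$ is, up to uniform equivalence, $2^{-k(\gamma-\gamma')}$ times a \emph{fixed} embedding $\iota_0 : H^s \hookrightarrow H^{s'}$ on that fixed compact manifold (with $(1/2,2)$ playing the role of one coordinate), which lies in ${\mathfrak S}_p$ because $s - s' > n/p$. Therefore $\alpha$-numbers give $\norm{\iota_k}_{{\mathfrak S}_p} \le C\,2^{-k(\gamma-\gamma')}\norm{\iota_0}_{{\mathfrak S}_p}$.

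It remains to assemble the pieces. Since ${\mathfrak S}_p$ for $0 < p \le 1$ is a quasi-Banach space with the $p$-triangle inequality $\norm{S+T}_{{\mathfrak S}_p}^p \le \norm{S}_{{\mathfrak S}_p}^p + \norm{T}_{{\mathfrak S}_p}^p$ (and for $p \ge 1$ it is a genuine Banach ideal), the series $\sum_k \iota_k$ converges in ${\mathfrak S}_p$: for $p \le 1$,
$$
\Bigl\|\sum_k \iota_k\Bigr\|_{{\mathfrak S}_p}^p \le \sum_k \norm{\iota_k}_{{\mathfrak S}_p}^p \le C^p\norm{\iota_0}_{{\mathfrak S}_p}^p\sum_k 2^{-kp(\gamma-\gamma')} < \infty
$$
because $\gamma > \gamma'$; for $p \ge 1$ one uses instead $\sum_k 2^{-k(\gamma-\gamma')} < \infty$ directly. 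Adding the interior contribution, which is already in ${\mathfrak S}_p$, shows the full embedding is in ${\mathfrak S}_p$. The only subtlety — and the step I expect to need the most care — is verifying that the localizations $M_{\chi_k}$ really do implement \emph{uniform-in-$k$} norm equivalences between the shell-restricted weighted $b$-Sobolev norms and the rescaled fixed Sobolev norms, including for non-integer and negative $s$ where the spaces are defined by interpolation and duality; this is where the precise structure of $\Diff^m_b(\Mbar)$ near the boundary (so that $x\partial_x$, not $\partial_x$, is the admissible vector field) is essential, and it is exactly the mechanism that forces the weight gap $\gamma > \gamma'$ rather than merely $\gamma \ge \gamma'$.
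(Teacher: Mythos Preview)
Your argument is correct and takes a genuinely different route from the paper's. The paper handles the collar by the logarithmic substitution $t = -\log x$: this turns $x\partial_x$ into $-\partial_t$, the $b$-density $dx/x$ into $dt$, and the weight $x^{\gamma}$ into exponential decay $e^{-\gamma t}$ as $t\to+\infty$, so that (after further charts on $\partial\Mbar$) a collar-supported element of $x^{\gamma}H^s_b$ lands in the Euclidean weighted space $H^{s,\delta}(\R^n)=\langle z\rangle^{-\delta}H^s(\R^n)$ for \emph{any} $\delta$. The Schatten property then comes from a separate lemma (Lemma~\ref{EmbedSpRn}) stating that $H^{s,\delta}(\R^n)\hookrightarrow H^{s',\delta'}(\R^n)$ lies in $\mathfrak S_p$ whenever both $s-s'>n/p$ and $\delta-\delta'>n/p$, which is in turn obtained from the Schatten criteria for pseudodifferential operators on $L^2(\R^n)$ due to Buzano and Toft. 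Your dyadic-shell decomposition is more elementary: it bypasses the Euclidean lemma and the Buzano--Toft input entirely, needing only the classical Sobolev embedding on a fixed compact piece together with the geometric series in $2^{-k(\gamma-\gamma')}$, and it makes immediately visible why only the strict inequality $\gamma>\gamma'$ (not a gap of size $n/p$) is required in the weight. The paper's approach, on the other hand, packages the entire boundary contribution into a single reusable $\R^n$ statement and handles the case $0<p\le 1$ by factoring the embedding through a chain of $N$ embeddings in $\mathfrak S_{Np}$ with $Np>1$, rather than invoking the quasi-Banach $p$-triangle inequality as you do.
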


The proof of Theorem~\ref{EmbeddingbCone} makes use of a corresponding result
about embeddings of weighted Sobolev spaces on $\R^n$. More precisely, for
$s,\delta \in \R$ let $H^{s,\delta}(\R^n) = \langle x \rangle^{-\delta}H^s(\R^n)$
(unlike in other contexts in this paper, $x$ represents the variable in $\R^n$ here).
We have the following lemma.

\begin{lemma}\label{EmbedSpRn}
The embedding
$$
\iota: H^{s,\delta}(\R^n) \hookrightarrow H^{s',\delta'}(\R^n)
$$
belongs to the Schatten class ${\mathfrak S}_p$, $0 < p < \infty$, for any
$\delta > \delta' + n/p$ and $s > s' + n/p$.
\end{lemma}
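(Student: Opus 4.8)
The plan is to reduce the statement to a Schatten-class property of a pseudodifferential-type operator on $\R^n$ with a symbol that decays in both the spatial and frequency variables, and then to invoke the Buzano--Toft results cited in the introduction. First I would factor the embedding $\iota$ through explicit isometries: writing $\langle x\rangle^{-\delta}H^s(\R^n)$ for $H^{s,\delta}(\R^n)$, the map $u \mapsto \langle x\rangle^{\delta}u$ is an isometry $H^{s,\delta}(\R^n)\to H^s(\R^n)$, and $\langle D\rangle^{s}$ is an isometry $H^s(\R^n)\to L^2(\R^n)$. Composing, the Schatten-class membership of $\iota\colon H^{s,\delta}\hookrightarrow H^{s',\delta'}$ is equivalent to that of the operator
$$
T = \langle D\rangle^{s'}\,\langle x\rangle^{-\delta'}\,\langle x\rangle^{\delta}\,\langle D\rangle^{-s}
= \langle D\rangle^{s'}\,\langle x\rangle^{\delta-\delta'}\,\langle D\rangle^{-s}
$$
acting on $L^2(\R^n)$. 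Setting $\mu = \delta-\delta' > n/p$ and $\nu = s - s' > n/p$, the operator $T = \langle D\rangle^{-\nu}\langle x\rangle^{\mu}$ up to composing with bounded factors (one commutes $\langle D\rangle^{s'}$ past things, or simply notes $\langle D\rangle^{s'}\langle x\rangle^{\mu}\langle D\rangle^{-s} = \langle D\rangle^{s'}\langle x\rangle^{\mu}\langle D\rangle^{-s'}\cdot\langle D\rangle^{s'-s}$ and the first factor has the same mapping properties as $\langle x\rangle^{\mu}\langle D\rangle^{-\nu}$ by symbolic calculus). The essential point is thus: the operator with symbol $\langle x\rangle^{-\mu}\langle\xi\rangle^{-\nu}$, i.e. a Shubin-type/anti-Wick or Weyl quantization of a symbol lying in the weighted class decaying like $\langle x\rangle^{-\mu}\langle\xi\rangle^{-\nu}$, belongs to $\mathfrak{S}_p$ whenever $\mu, \nu > n/p$.

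Next I would quote the relevant theorem of Buzano and Toft \cite{BuzanoToft,Toft} on Schatten properties of pseudodifferential operators (or Weyl/anti-Wick operators) with symbols in weighted modulation or Shubin classes: for a symbol $a$ with $|a(x,\xi)| \lesssim \langle x\rangle^{-\mu}\langle\xi\rangle^{-\nu}$ (together with appropriate derivative bounds, which $\langle x\rangle^{-\mu}\langle\xi\rangle^{-\nu}$ trivially satisfies as it is a classical symbol in the Shubin calculus), the quantization $a(x,D)$ lies in $\mathfrak{S}_p(L^2(\R^n))$ provided $\langle x\rangle^{-\mu}\langle\xi\rangle^{-\nu} \in L^p(\R^n_x\times\R^n_\xi)$, and $\int_{\R^n}\int_{\R^n}\langle x\rangle^{-p\mu}\langle\xi\rangle^{-p\nu}\,dx\,d\xi < \infty$ exactly when $p\mu > n$ and $p\nu > n$, that is $\mu > n/p$ and $\nu > n/p$. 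This is precisely the hypothesis of the lemma, so the membership $T\in\mathfrak{S}_p$ follows, and hence $\iota\in\mathfrak{S}_p$.

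The main obstacle is making the reduction to a clean symbol class rigorous: the operator $\langle D\rangle^{s'}\langle x\rangle^{\mu}\langle D\rangle^{-s}$ is not literally a quantization of $\langle x\rangle^{\mu}\langle\xi\rangle^{s'-s}$ on the nose (the orders $s, s'$ need not be related to the Shubin bidegree symmetrically), so some care with the composition calculus is needed — one wants to write it as $b(x,D)$ with $b$ in a Shubin-type class of bidegree $(\mu, \nu)$ in the sense that $|\partial_x^\alpha\partial_\xi^\beta b| \lesssim \langle x\rangle^{\mu-|\alpha|}\langle\xi\rangle^{-\nu-|\beta|}$, and check that the Buzano--Toft criterion applies to that class. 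Alternatively, and perhaps more cleanly, one splits $\langle D\rangle^{s'-s} = \langle D\rangle^{-\nu}$ and uses the fact that $\langle D\rangle^{s'}\langle x\rangle^{\mu}\langle D\rangle^{-s'}$ is bounded on $L^2$ (it has symbol in $S^0_{1,0}$ with values in the $\langle x\rangle^{\mu}$-weighted space, and one only needs an a priori bound here since it will be composed with the $\mathfrak{S}_p$ factor $\langle x\rangle^{\mu}\langle D\rangle^{-\nu}$ after commuting, or one uses boundedness of weighted-to-weighted maps). The only genuinely quantitative input is the $L^p$-integrability computation above, which is the routine calculation that pins down the exponents $\mu, \nu > n/p$.
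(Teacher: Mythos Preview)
Your strategy is essentially the paper's: conjugate by the isomorphisms between the weighted Sobolev spaces and $L^2(\R^n)$ to reduce to a single pseudodifferential operator on $L^2$ with symbol comparable to $\langle x\rangle^{-\mu}\langle\xi\rangle^{-\nu}$, then invoke the Buzano--Toft criterion that this operator lies in $\mathfrak{S}_p$ exactly when the symbol lies in $L^p(\R^{2n})$, i.e.\ when $\mu,\nu>n/p$.

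Two issues. First, a sign slip: with your isometries the conjugated operator is
$\langle D\rangle^{s'}\langle x\rangle^{\delta'}\cdot\langle x\rangle^{-\delta}\langle D\rangle^{-s}
= \langle D\rangle^{s'}\langle x\rangle^{-(\delta-\delta')}\langle D\rangle^{-s}$,
a \emph{negative} power of $\langle x\rangle$; your displayed $T$ has the exponent reversed, and the subsequent claim that $\langle D\rangle^{s'}\langle x\rangle^{\mu}\langle D\rangle^{-s'}$ is $L^2$-bounded would be false for $\mu>0$. With the correct sign everything works. The paper in fact avoids the composition calculus you worry about altogether: via two commutative diagrams it first reduces to $s'=\delta'=0$, and then directly to the operator $\Lambda^{-s,-\delta}=\langle x\rangle^{-\delta}\langle D\rangle^{-s}$ on $L^2$, so no symbolic manipulation of products is needed.

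Second, and this is the real gap: the results in \cite{BuzanoToft,Toft} are stated only for Schatten indices $p\geq 1$, while the lemma asserts the full range $0<p<\infty$. The paper closes this by a bootstrap: given $0<p\leq 1$, pick $N\in\N$ with $Np>1$ and factor the embedding as a chain
\[
H^{s_0,\delta_0}\hookrightarrow H^{s_1,\delta_1}\hookrightarrow\cdots\hookrightarrow H^{s_N,\delta_N}
\]
with $s_j=s-j(s-s')/N$, $\delta_j=\delta-j(\delta-\delta')/N$. Each step satisfies the hypotheses with index $Np>1$, hence lies in $\mathfrak{S}_{Np}$ by the case already handled, and the composition of $N$ operators in $\mathfrak{S}_{Np}$ lies in $\mathfrak{S}_p$. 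Without this reduction your argument only establishes the lemma for $p>1$.
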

\begin{proof}
For the proof we may without loss of generality assume that $p > 1$: Otherwise,
let $N \in \N$ with $\frac{1}{N} < p$, and consider the composition of embeddings
$$
H^{s_0,\delta_0}(\R^n) \hookrightarrow H^{s_1,\delta_1}(\R^n) \hookrightarrow \ldots
\hookrightarrow H^{s_N,\delta_N}(\R^n),
$$
where $s_j = s - j\cdot\frac{s-s'}{N}$,
$\delta_j = \delta - j\cdot\frac{\delta-\delta'}{N}$,
$j=0,\ldots,N$. In view of $s_{j-1} - s_j = \frac{s-s'}{N} > \frac{n}{Np}$ and
$\delta_{j-1} - \delta_j = \frac{\delta-\delta'}{N} > \frac{n}{Np}$ and $Np > 1$
we may conclude that the embedding $H^{s_{j-1},\delta_{j-1}}(\R^n) \hookrightarrow
H^{s_j,\delta_j}(\R^n)$ belongs to ${\mathfrak S}_{Np}$ (if we take the result of
the lemma for granted for class indices greater than one).
The composition of $N$ mappings of class ${\mathfrak S}_{Np}$ belongs to
${\mathfrak S}_p$ by the general properties of these classes.

Hence assume in the sequel that $p > 1$. For $\mu,\varrho \in \R$ let
$\Lambda^{\mu,\varrho} = \langle x \rangle^{\varrho}\langle D_x \rangle^{\mu}$ and
$\tilde{\Lambda}^{\mu,\varrho} = \langle D_x \rangle^{\mu}\langle x \rangle^{\varrho}$.
Then
$$
\Lambda^{\mu,\varrho}, \tilde{\Lambda}^{\mu,\varrho} :
H^{s,\delta}(\R^n) \to H^{s-\mu,\delta-\varrho}(\R^n)
$$
are isomorphisms for all $s,\delta \in \R$, and obviously
$\bigl(\Lambda^{\mu,\varrho}\bigr)^{-1} = \tilde{\Lambda}^{-\mu,-\varrho}$.
In view of the commutative diagram
$$
\begin{CD}
H^{s,\delta}(\R^n) @>\iota>> H^{s',\delta'}(\R^n) \\
@V\Lambda^{s',\delta'}VV @AA\tilde{\Lambda}^{-s',-\delta'}A \\
H^{s-s',\delta-\delta'}(\R^n) @>\iota>> L^2(\R^n)
\end{CD}
$$
and the operator ideal property of the Schatten classes ${\mathfrak S}_p$ we may
assume without loss of generality that $s'=\delta'=0$.
Using again the operator ideal property and the commutative diagram
$$
\begin{CD}
H^{s,\delta}(\R^n) @>\iota>> L^2(\R^n) \\
@V\tilde{\Lambda}^{s,\delta}VV @| \\
L^2(\R^n) @>\Lambda^{-s,-\delta}>> L^2(\R^n)
\end{CD}
$$
we see that it suffices to show that the operator $\Lambda^{-s,-\delta} : L^2(\R^n) \to
L^2(\R^n)$ belongs to ${\mathfrak S}_p$ for $s > n/p$ and $\delta > n/p$.
This, however, is a direct consequence of \cite[Proposition 4.2]{BuzanoToft}, it
also follows from \cite{Toft} (these papers consider Schatten classes with
indices $p \geq 1$, this is why we made that reduction at the beginning of this proof).
The point here is that the symbol
$\langle x \rangle^{-\delta}\langle \xi \rangle^{-s}$ of the operator
$\Lambda^{-s,-\delta}$ belongs to $L^p(\R^{2n})$ precisely if $s > n/p$ and
$\delta > n/p$. The papers \cite{BuzanoToft,Toft} are concerned with characterizing
the Schatten class property with index $p$ for certain classes of pseudodifferential 
operators acting on $L^2(\R^n)$ in terms of $L^p$-bounds on their symbols (or on the
weight functions of the symbol classes). In our situation at hand these results are
applicable and lead to the desired conclusion.
\end{proof}

\begin{proof}[Proof of Theorem~\ref{EmbeddingbCone}]
Since the proof is based on a localization argument, we may assume without
loss of generality that $E \to \Mbar$ is the trivial line bundle. Moreover,
in view of the commutative diagram
$$
\begin{CD}
x^{\gamma}H^s_b(\Mbar) @>\iota>> x^{\gamma'}H^{s'}_b(\Mbar) \\
@Vx^{-\gamma'}VV @AAx^{\gamma'}A \\
x^{\gamma-\gamma'}H^s_b(\Mbar) @>\iota>> H^{s'}_b(\Mbar)
\end{CD}
$$
we may assume that $\gamma' = 0$.

Choose a collar neighborhood $\chi_0 : U_0 \cong [0,\eps)\times \partial\Mbar$
of the boundary. Without loss of generality, we may assume that the defining
function $x$ coincides in $U_0$ with the projection map to the coordinate in $[0,\eps)$.
Away from the boundary choose a finite collection $U_1,\ldots,U_N$
of open subsets of $\Mbar$ that are via charts $\chi_j : U_j \to \Omega_j$,
$j = 1,\ldots,N$, diffeomorphic to open bounded subsets $\Omega_j \subset \R^n$
such that $\Mbar = \bigcup_{j=0}^N U_j$.
Let $\varphi_j$, $j=0,\ldots,N$, be a smooth partition of unity subordinate to this
covering, and choose $\psi_j \in C^{\infty}(\Mbar)$ with compact support contained in
$U_j$ such that $\psi_j \equiv 1$ in a neighborhood of the support of $\varphi_j$.

With this data we further proceed to define maps $T_j : x^{\gamma}H^s_b(\Mbar) \to
H^{s'}_b(\Mbar)$, $j = 0,\ldots,N$, that belong to the Schatten class
${\mathfrak S}_p$ (provided that $s > s' + n/p$ as is assumed here), and such that
$\iota = \sum\limits_{j=0}^N T_j$.

More precisely, for $j = 1,\ldots,N$ define $T_j$ to be the composition of the maps
$$
T_j = \bigl(\chi_j^{\ast}\circ\Psi_j\bigr)\circ\iota\circ
\bigl(\chi_{j,\ast}\circ\varphi_j\bigr).
$$
Here $\chi_{j,\ast}\circ\varphi_j : x^{\gamma}H^s_b(\Mbar) \to H^{s,\delta}(\R^n)$
is the multiplication operator by $\varphi_j$ followed by push-forward with
respect to $\chi_j$, where $\delta > n/p$ can be chosen arbitrarily.
$\iota : H^{s,\delta}(\R^n) \to H^{s',0}(\R^n)$ is the embedding that belongs to
${\mathfrak S}_p$ by Lemma~\ref{EmbedSpRn}, and
$\chi_j^{\ast}\circ\Psi_j : H^{s',0}(\R^n) \to H^{s'}_b(\Mbar)$ is the multiplication
operator by $\Psi_j = \chi_{j,\ast}\psi$ followed by pull-back with respect to $\chi_j$.
All maps involved are continuous, and by the operator ideal property of ${\mathfrak S}_p$
we obtain that $T_j$ belongs to ${\mathfrak S}_p$ for $j = 1,\ldots,N$.
Observe that $T_ju = \varphi_ju$ for $j = 1,\ldots,N$.

Analogously to the other $T_j$, the operator $T_0$ is just $T_0u = \varphi_0 u$.
In order to see that it belongs to ${\mathfrak S}_p$ we proceed as follows:
Choose coordinate neighborhoods $U_{0j} \subset \partial\Mbar$, $j=1,\ldots,M$,
and charts $\chi_{0j} : U_{0j} \to \Omega_{0j}$, where $\Omega_{0j} \subset \R^{n-1}$
is open and bounded, such that $\partial\Mbar = \bigcup_{j=1}^MU_{0j}$. Choose
a smooth subordinate partition of unity $\varphi_{0j}$, $j = 1,\ldots,M$, and functions
$\psi_{0j} \in C^{\infty}(\partial\Mbar)$ with compact support in $U_{0j}$ such
that $\psi_{0j} \equiv 1$ in a neighborhood of the support of $\varphi_{0j}$.
Let $t$ be the diffeomorphism $\overline{\R}_+ \to \R$ defined by $t(x) = -\log(x)$.
We write $T_0 = \sum_{j=1}^M T_{0j}$, where each operator $T_{0j}$ is defined by
\begin{equation}\label{T0jEquation}
T_{0j} = \bigl[\bigl(\chi^{\ast}_{0}\circ\Psi_0 \bigr) \circ\bigl((t,\chi_{0j})^{\ast} \circ\Psi_{0j}\bigr)\bigr]\circ\iota\circ\bigl[\bigl((t,\chi_{0j})_{\ast} \circ\varphi_{0j}\bigr) \circ \bigl(\chi_{0,\ast}\circ\varphi_0 \bigr)\bigr].
\end{equation}
Here
$$
\bigl[\bigl((t,\chi_{0j})_{\ast} \circ\varphi_{0j}\bigr) \circ 
\bigl(\chi_{0,\ast}\circ\varphi_0 \bigr)\bigr] :
x^{\gamma}H^s_b(\Mbar) \to H^{s,\delta}(\R^n)
$$
is continuous, where $\delta > n/p$ can be chosen arbitrarily. Indeed, multiplication by
$\varphi_0$ and push-forward by $\chi_0$ localizes distributions near the boundary
and introduces the splitting of variables $(x,y) \in [0,\eps)\times\partial\Mbar$,
multiplication by $\varphi_{0j}$ localizes the $y$-dependence further to the
coordinate neighborhood $U_{0j}$, push-forward by $\chi_{0j}$ in the $y$-variable
and by $t$ in the $x$-variable produces distributions on $\R\times\R^{n-1}$
that are supported in the strip $\R\times\Omega_{0j}$ and that vanish in a
neighborhood of $t = -\infty$. The weight $x^{\gamma}$ translates into an exponential
weight $e^{-t\gamma}$ near $t = \infty$. In view of the support properties
just discussed, we see that we certainly obtain a Sobolev distribution on $\R^n$
that exhibits any polynomial decay (in the Sobolev norm), or, in other words, we
arrive in $H^{s,\delta}(\R^n)$ for any choice of $\delta > n/p$ as was claimed.

The other parts in \eqref{T0jEquation} are the embedding $\iota : H^{s,\delta}(\R^n)
\to H^{s',0}(\R^n)$ that belongs to ${\mathfrak S}_p$ by Lemma~\ref{EmbedSpRn},
and the operator
$$
\bigl[\bigl(\chi^{\ast}_{0}\circ\Psi_0 \bigr) \circ\bigl((t,\chi_{0j})^{\ast} 
\circ\Psi_{0j}\bigr)\bigr] :
H^{s',0}(\R^n) \to H^{s'}_b(\Mbar)
$$
consisting of multiplication by $\Psi_{0j} = \chi_{0j,\ast}\psi_{0j}$, pull-back
via $t$ and $\chi_{0j}$ and multiplication by $\Psi_0 = \chi_{0,\ast}\psi_0$
to yield disbritutions on $[0,\eps)\times\partial\Mbar$, and finally pull-back by
$\chi_0$ to yield distributions in $H^{s'}_b(\Mbar)$.
Consequently, each operator $T_{0j}$ belongs to ${\mathfrak S}_p$, and
so $T_0 = \sum_{j=1}^M T_{0j}$ belongs to ${\mathfrak S}_p$.

In conclusion, $\iota = \sum_{j=0}^N T_j : x^{\gamma}H^s_b(\R^n) \to H^{s'}_b(\R^n)$
belongs to ${\mathfrak S}_p$ (provided that $s > s' + n/p$), and the proof of the
theorem is complete.
\end{proof}

\medskip

For the analysis of boundary value problems we will also need the corresponding
version of Theorem~\ref{EmbeddingbCone} for the appropriate weighted $b$-Sobolev
spaces on certain manifolds with corners.

More precisely, let $\Mbar$ be a compact $n$-manifold with corners of codimension
two (we work with the terminology from \cite{RBM2} here).
Let $\partial\Mbar = \partial_{\reg}\Mbar \cup \partial_{\sing}\Mbar$, where
both $\partial_{\reg}\Mbar$ and $\partial_{\sing}\Mbar$ consist of unions of (different)
boundary hypersurfaces of $\Mbar$. We will refer to those hypersurfaces as regular
or singular, respectively. We require that for any two hypersurfaces $H$
and $H'$ of the boundary with either $H,H' \subset \partial_{\reg}\Mbar$ or
$H,H' \subset \partial_{\sing}\Mbar$ we either have $H\cap H' = \emptyset$ or
$H = H'$. Consequently, the codimension two strata occur as intersections of
regular and singular hypersurfaces only. Both $\partial_{\reg}\Mbar$ and
$\partial_{\sing}\Mbar$ are smooth compact manifolds with
boundary, and we have $\partial\bigl(\partial_{\reg}\Mbar\bigr) =
\partial\bigl(\partial_{\sing}\Mbar\bigr) = \partial_{\reg}\Mbar \cap
\partial_{\sing}\Mbar$.

Let $2\Mbar_{\reg}$ be the double of $\Mbar$ across the regular boundary
hypersurfaces. $2\Mbar_{\reg}$ is a compact smooth manifold with boundary, and
we have $\Mbar \subset 2\Mbar_{\reg}$. Let $r^+$ be the restriction operator
for distributions on the interior of $2\Mbar_{\reg}$ to the interior of $\Mbar$,
and define as usual $H^s_b(\Mbar):= r^+H^s_b(2\Mbar_{\reg})$ equipped with the quotient
topology.
More generally, if $E$ is a (Hermitian) vector bundle on $\Mbar$ and $x$ is
a defining function for $\partial_{\sing}\Mbar$, we obtain the weighted space
$x^{\gamma}H^s_b(\Mbar;E)$ in the way just described.

\begin{corollary}\label{EmbeddingbConeBdry}
The statement of Theorem~\ref{EmbeddingbCone} is valid for the weighted
$H^s_b$-spaces on compact manifolds with corners of codimension two.
\end{corollary}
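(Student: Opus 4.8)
The plan is to deduce the corners case from the case of a manifold with boundary by an extension--restriction argument, exploiting that the double $2\Mbar_{\reg}$ is a genuine smooth compact manifold with boundary, so that Theorem~\ref{EmbeddingbCone} applies to it directly. Since the doubling construction only affects the regular hypersurfaces, the defining function $x$ for $\partial_{\sing}\Mbar$ can be chosen symmetric under the reflection across $\partial_{\reg}\Mbar$ and thus extends to a defining function for $\partial(2\Mbar_{\reg})$; likewise $E$ extends to a Hermitian bundle over $2\Mbar_{\reg}$. Theorem~\ref{EmbeddingbCone}, applied on $2\Mbar_{\reg}$, then gives that
$$
\iota_0 : x^{\gamma}H^s_b(2\Mbar_{\reg};E) \hookrightarrow x^{\gamma'}H^{s'}_b(2\Mbar_{\reg};E)
$$
belongs to ${\mathfrak S}_p$ whenever $\gamma > \gamma'$ and $s > s' + n/p$.

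The key ingredient is a bounded extension operator
$$
e : x^{\gamma}H^s_b(\Mbar;E) \to x^{\gamma}H^s_b(2\Mbar_{\reg};E), \qquad r^+\circ e = \mathrm{id}.
$$
By definition $x^{\gamma}H^s_b(\Mbar;E)$ carries the quotient topology of $x^{\gamma}H^s_b(2\Mbar_{\reg};E)$ under $r^+$. Since the $b$-Sobolev spaces are built by interpolation and duality from $L^2_b$ and are therefore Hilbert spaces, and since $\ker r^+$ (the distributions in $x^{\gamma}H^s_b(2\Mbar_{\reg};E)$ vanishing on the interior of $\Mbar$) is a closed subspace, the quotient is a Hilbert space and $r^+$ restricts to a topological isomorphism of $(\ker r^+)^{\perp}$ onto $x^{\gamma}H^s_b(\Mbar;E)$. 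Taking $e$ to be the inverse of this isomorphism, composed with the inclusion $(\ker r^+)^{\perp}\hookrightarrow x^{\gamma}H^s_b(2\Mbar_{\reg};E)$, yields the desired bounded (in fact isometric) right inverse. The same reasoning at regularity $s'$ exhibits $r^+ : x^{\gamma'}H^{s'}_b(2\Mbar_{\reg};E) \to x^{\gamma'}H^{s'}_b(\Mbar;E)$ as a bounded map. Now on $x^{\gamma}H^s_b(\Mbar;E)$ one has the factorization $\iota = r^+\circ\iota_0\circ e$, because $eu$ extends $u$, $\iota_0$ regards it in the coarser space, and $r^+$ restricts it back to $u$. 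Since $\iota_0 \in {\mathfrak S}_p$ while $e$ and $r^+$ are bounded, the operator ideal property of ${\mathfrak S}_p$ forces $\iota \in {\mathfrak S}_p$, which is exactly the assertion of the corollary.

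I do not expect a serious obstacle here: the only point demanding care is the construction of the bounded right inverse $e$, and on Hilbert spaces this is automatic from the complementedness of closed subspaces, provided one checks (as above) that the spaces in play really are Hilbert spaces and that the doubling is compatible with the weight $x^{\gamma}$. A more hands-on alternative, which avoids the functional-analytic extension altogether, is to rerun the localization proof of Theorem~\ref{EmbeddingbCone} with additional corner charts: near a codimension-two corner, in coordinates $(x,x',y)\in[0,\eps)\times[0,\eps)\times\R^{n-2}$ with $x$ the singular and $x'$ the regular defining function, one applies the substitution $t=-\log x$ in the singular direction, a reflection in the regular variable $x'$, and then Lemma~\ref{EmbedSpRn} on $\R^n=\R\times\R\times\R^{n-2}$ with the weight active only in the $t$-variable; this is more laborious but entirely parallel to the argument already given, and either route completes the proof.
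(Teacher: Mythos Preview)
Your argument is correct and follows exactly the same route as the paper: factor the embedding as $r^+\circ\iota_0\circ e$ with a bounded extension operator $e$ to the double $2\Mbar_{\reg}$, apply Theorem~\ref{EmbeddingbCone} there, and use the operator ideal property of ${\mathfrak S}_p$. The only difference is that you supply a concrete justification for the existence of the bounded right inverse $e$ (via orthogonal complements in the Hilbert space), whereas the paper simply invokes an extension operator $e_{s,\gamma}$ without further comment.
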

\begin{proof}
We just need to note that the embedding $x^{\gamma}H^s_b(\Mbar) \hookrightarrow 
x^{\gamma'}H^{s'}_b(\Mbar)$ can be written as the composition
of the maps $r^+\circ\iota\circ e_{s,\gamma}$, where $e_{s,\gamma} :
x^{\gamma}H^s_b(\Mbar) \to x^{\gamma}H^s_b(2\Mbar_{\reg})$ is an
extension operator, $\iota : x^{\gamma}H^s_b(2\Mbar_{\reg}) \to
x^{\gamma'}H^{s'}_b(2\Mbar_{\reg})$ is the embedding that belongs to
${\mathfrak S}_p$ according to Theorem~\ref{EmbeddingbCone} (provided that
$\gamma > \gamma'$ and $s > s' + n/p$ as is assumed here), and
$r^+ : x^{\gamma'}H^{s'}_b(2\Mbar_{\reg}) \to x^{\gamma'}H^{s'}_b(\Mbar)$ is the
restriction operator.
\end{proof}

\section{Cone operators and rays of minimal growth}\label{ConeOperators}

\noindent
In this section we compile the definitions and some of the basic results about
cone operators. For detailed accounts we refer to the monograph \cite{Le97} and
the papers \cite{GKM1,GKM2,GiMe01}. Boundary value problems for cone operators
are discussed in \cite{CorSchroSei,KrainBVPCone}. There are many more references
that could be mentioned, but those are the ones that are closest to our present
scope since they emphasize the unbounded operator aspect and discuss operators
of general form.

Since rays of minimal growth are essential in the context of the present paper, we will
proceed to review the results from \cite{GKM1,GKM2,GKM3,KrainBVPCone} about when
a ray $\Gamma \subset \C$ is a ray of minimal growth for a closed extension of an
elliptic cone operator.

\medskip

Let $\Mbar$ be a smooth, compact $n$-manifold with boundary $Y$.
The natural framework for cone geometry is the $c$-cotangent bundle
\begin{equation}\label{cCotangent}
\cpi:\cT^*\Mbar \to \Mbar,
\end{equation}
see~\cite{GKM1}, a vector bundle whose space of smooth sections is in one-to-one 
correspondence with the space of all smooth $1$-forms on $\Mbar$ that are conormal 
to $Y$, i.e., all $\omega \in C^{\infty}(\Mbar,T^*\Mbar)$ whose pullback to $Y$
vanishes. The isomorphism is given by a bundle homomorphism
\begin{equation}\label{TheHomomorphism}
\cev: \cT^*\Mbar \to T^*\Mbar
\end{equation}
which is an isomorphism over $\open \Mbar$.
In coordinates $(x,y_1,\dotsc,y_{n-1})$ near the boundary, where $x$ is a defining 
function for $Y$, a local frame for $\cT^*\Mbar$ is given by the sections mapped by 
$\cev$ to the forms $dx$, $xdy_1, \dotsc, xdy_{n-1}$.

By a $c$-metric we mean any metric on the dual of $\cT^*\Mbar$. Such a metric induces 
(via the homomorphism \eqref{TheHomomorphism}) a Riemannian metric $\cg$ on
$\open\Mbar$. In coordinates near the boundary as in the previous paragraph, $\cg$ is
represented as a smooth symmetric $2$-cotensor
\begin{equation*}
\cg = g_{00}\,dx\otimes dx + \sum\limits_{j=1}^{n-1}g_{0j}\,dx\otimes xdy_j +
\sum\limits_{i=1}^{n-1}g_{i0}\,xdy_i\otimes dx + \sum\limits_{i,j=1}^{n-1}g_{ij}\,xdy_i\otimes xdy_j;
\end{equation*}
the matrix $(g_{ij})$ depends smoothly on $(x,y)$ and is positive definite up to $x=0$.

Special cases of $c$-metrics are warped and straight cone metrics. A warped cone metric 
is a Riemannian metric on $\Mbar$ such that there is a diffeomorphism of a neighborhood 
$U$ of $Y$ in $M$ to $[0,\eps) \times Y$ under which the metric takes on the form
$dx^2 + x^2g_Y(x)$ for a family of metrics $g_Y(x)$ on $Y$ which is smooth up to $x=0$; 
here $x$ is of course the variable in $[0,\eps)$. If the diffeomorphism is such that 
$g_Y(x)$ is in fact independent of $x$ for small $\eps$, then $\cg$ is a straight cone 
metric.

\medskip

Let $E,F \to \Mbar$ be (Hermitian) vector bundles. A cone differential
operator of order $m$ acting from sections of $E$ to sections of $F$
is an element $A$ of $x^{-m}\Diff_b^m(\Mbar;E,F)$, where $\Diff_b^m(\Mbar;E,F)$ is
the space of  totally characteristic differential operators of order $m$, see
Section~\ref{ConeSobolevSpacesEmbeddings}.
Thus $A$ is a linear differential operator $C^\infty(\open \Mbar;E) \to
C^{\infty}(\open \Mbar;F)$, of order $m$, which near any point in $Y$,
in coordinates $(x,y_1,\ldots,y_{n-1})$ as above, is of the form 
\begin{equation}\label{cone-operator}
  A=x^{-m}\sum_{k+|\alpha|\le m} a_{k\alpha}(x,y)(xD_x)^k D_y^\alpha 
\end{equation} 
with coefficients $a_{k\alpha}$ smooth up to $x=0$. For example, the Laplacian
with respect to any $c$-metric is a cone differential operator of order $2$.

The standard principal symbol of a cone operator $A$ over the interior determines, with 
the aid of the map $\cev$ in \eqref{TheHomomorphism}, a smooth homomorphism 
$\cpi^*E\to\cpi^*F$. This is the $c$-principal symbol $\csym(A)$ of $A$. In local 
coordinates near $Y$, 
\begin{equation*}
  \csym(A)=\sum_{k+|\alpha|= m} a_{k\alpha}(x,y)\xi^k \eta^\alpha. 
\end{equation*} 
The operator $A$ is said to be $c$-elliptic if $\csym(A)$ is invertible on
$\cT^*M\minus 0$.

In the sequel we fix an operator $A \in x^{-m}\Diff_b^m(\Mbar;E)$, $m > 0$, and
assume that it is $c$-elliptic.
For every weight $\gamma \in \R$ the operator $A$ is a densely defined
unbounded operator
\begin{equation}\label{StartDomain}
A:C_c^\infty(\open \Mbar;E)\subset x^{\gamma}L_b^2(\Mbar;E)\to x^{\gamma} L_b^2(\Mbar;E).
\end{equation}
Observe that the geometric $L^2$-space with respect to any $c$-metric on $\Mbar$
and Hermitian metric on $E$ is the space $x^{-n/2}L_b^2(\Mbar;E)$, where $n = \dim\Mbar$.

For any choice of $\gamma \in \R$ there are two canonical closed extensions of $A$: 
\begin{align*}
\Dom_{\min} &= \text{domain of the closure of \eqref{StartDomain}},\\
\Dom_{\max} &=\{u\in x^{\gamma}L_b^2(\Mbar;E);\; Au\in x^{\gamma}L_b^2(\Mbar;E)\}.
\end{align*}
These are complete with respect to the graph norm, $\|u\|_A=\|u\|+\|Au\|$, and the 
former is a subspace of the latter.
The following theorem lists basic results proved in \cite{GiMe01,Le97}.

\begin{theorem}\label{BasicTheorem}
Let $A\in x^{-m}\Diff_b^m(\Mbar;E)$, $m > 0$, be $c$-elliptic, and consider $A$ an
unbounded operator in $x^{\gamma}L^2_b(\Mbar;E)$ as described above.
\begin{enumerate}[(a)]
\item $\dim \Dom_{\max}/\Dom_{\min}<\infty$. In particular, every
intermediate space $\Dom_{\min}\subset \Dom\subset \Dom_{\max}$
gives rise to a closed extension 
\begin{equation*}
A_{\Dom}:\Dom\subset x^{\gamma}L^2_b(\Mbar;E) \to x^{\gamma}L^2_b(\Mbar;E).
\end{equation*}
\item All closed extensions $A_{\Dom}$ of $A$ are Fredholm. Moreover, 
\begin{equation}\label{RelIndexA}
\Ind A_{\Dom}= \Ind A_{\Dom_{\min}}+ \dim\Dom/\Dom_{\min}. 
\end{equation}
\item $\Dom_{\min} = \bigcap_{\eps > 0}x^{\gamma+m-\eps}H^m_b(\Mbar;E) \cap
\Dom_{\max}$.

Moreover, $x^{\gamma+m}H^m_b(\Mbar;E) \subset \Dom_{\min}$, and there is equality
$x^{\gamma+m}H^m_b(\Mbar;E) = \Dom_{\min}$ if and only if
$\spec_b(A)\cap\{\sigma \in \C;\; \Im(\sigma) = -\gamma-m\} = \emptyset$.
The set $\spec_b(A) \subset \C$ is the boundary spectrum of $A$, see \cite{RBM2},
a discrete set that contains at most finitely many points in each horizontal strip
of finite width.
\item There exists $\eps > 0$ such that $\Dom_{\max} \hookrightarrow
x^{\gamma+\eps}H^m_b(\Mbar;E)$.
\end{enumerate}
\end{theorem}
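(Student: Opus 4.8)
The plan is to reduce everything to the model analysis near $Y=\partial\Mbar$, following \cite{Le97,GiMe01}. Write $A=x^{-m}P$ with $P\in\Diff^m_b(\Mbar;E)$; the governing object is the \emph{conormal symbol}
\[
\widehat{P}(\sigma)=\sum_{k+|\alpha|\le m}a_{k\alpha}(0,y)\,\sigma^k D_y^\alpha,\qquad\sigma\in\C,
\]
a holomorphic family of differential operators on the closed manifold $Y$ whose principal symbol in $(\sigma,\eta)$ is the restriction of $\csym(A)$ to $Y$. Hence $c$-ellipticity makes $\widehat{P}(\sigma)$ elliptic on $Y$ for every $\sigma$ and parameter-elliptic in $\sigma$ in a conic neighbourhood of $\R\subset\C$, so $\widehat{P}(\sigma)$ is invertible for $\sigma$ in any horizontal strip of finite width once $|\Re\sigma|$ is large; analytic Fredholm theory then gives invertibility of $\widehat{P}(\sigma)$ for all $\sigma$ outside a discrete set which, up to the normalization of the statement, is $\spec_b(A)$, finite in each horizontal strip. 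The freezing of $A$ at $Y$, Mellin-transformed in $x$, is controlled by $\widehat{P}(\sigma)^{-1}$ on a weight line; in the present normalization this line is $\Im\sigma=-\gamma-m$, and whether $\spec_b(A)$ meets it governs $\Dom_{\min}$.

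The technical heart, carried out in \cite{Le97,GiMe01}, is a parametrix construction in the $b$-/cone calculus: combining the interior elliptic parametrix with $\widehat{P}(\sigma)^{-1}$ along a weight line slightly off $\spec_b(A)$, transported back by the inverse Mellin transform and glued by a partition of unity, one obtains $B$ with $BA=I-R_\ell$ and $AB=I-R_r$, whose remainders are compact by the mapping properties of $b$-operators together with the compactness of the pertinent weighted $b$-Sobolev embeddings (a consequence of Section~\ref{ConeSobolevSpacesEmbeddings}). This yields at once that $A_{\Dom_{\min}}$ and $A_{\Dom_{\max}}$ are Fredholm. Moreover, for $u\in\Dom_{\max}$ one has $Pu=x^mAu\in x^{\gamma+m}L^2_b$, so $\mathcal{M}(Pu)$ is holomorphic for $\Im\sigma>-\gamma-m$; expressing $\mathcal{M}u(\sigma)=\widehat{P}(\sigma)^{-1}\mathcal{M}(Pu)(\sigma)$ and shifting the Mellin inversion contour for $u$ from $\Im\sigma=-\gamma$ down towards $\Im\sigma=-\gamma-m$ collects the poles $\sigma_j\in\spec_b(A)$ lying strictly in between, each contributing (via the principal part of the resolvent there) a finite sum of terms $\omega(x)\,x^{i\sigma_j}(\log x)^\ell c_{j\ell}(y)$, with $\omega$ a cutoff near $Y$, the $c_{j\ell}$ in finite-dimensional spaces, and the logarithmic powers bounded by the pole orders. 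Thus every $u\in\Dom_{\max}$ splits as $u=u_{\sing}+u_{\min}$ with $u_{\min}\in\Dom_{\min}$ and $u_{\sing}$, modulo $\Dom_{\min}$, in a fixed finite-dimensional space $\Sing$ spanned by such singular functions, whose exponents satisfy $-\gamma-m<\Im\sigma_j<-\gamma$. Separately, $x^{\gamma+m}H^m_b(\Mbar;E)\subset\Dom_{\min}$ since $C^\infty_c(\open\Mbar;E)$ is dense in $x^{\gamma+m}H^m_b$ and $A\colon x^{\gamma+m}H^m_b\to x^\gamma L^2_b$ is bounded.

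Parts (a), (c), (d) are read off from this decomposition. As $\spec_b(A)$ meets the strip $-\gamma-m<\Im\sigma<-\gamma$ in finitely many points, $\Sing$ is finite-dimensional; it surjects onto $\Dom_{\max}/\Dom_{\min}$, so the latter is finite-dimensional, and then $\Dom_{\min}$ is closed of finite codimension in $\Dom_{\max}$, whence every intermediate $\Dom$ is automatically closed in the graph norm --- this is (a). For (c): a comparison of $L^2_b$-integrability thresholds shows that $\omega(x)\,x^{i\sigma_j}(\log x)^\ell c_{j\ell}$ with $-\gamma-m<\Im\sigma_j<-\gamma$ lies in $\bigcap_{\eps>0}x^{\gamma+m-\eps}L^2_b$ only if $c_{j\ell}=0$, so $u\in\Dom_{\min}$ iff $u_{\sing}=0$ iff $u\in\bigcap_{\eps>0}x^{\gamma+m-\eps}H^m_b(\Mbar;E)\cap\Dom_{\max}$; and $x^{\gamma+m}H^m_b(\Mbar;E)=\Dom_{\min}$ exactly when no $\mathcal{M}u$ with $u\in\Dom_{\min}$ acquires a pole on $\Im\sigma=-\gamma-m$, i.e.\ when $\spec_b(A)\cap\{\Im\sigma=-\gamma-m\}=\emptyset$. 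For (d): each summand of $u=u_{\sing}+u_{\min}$ arises from a Mellin inversion contour strictly below $\Im\sigma=-\gamma$ --- the residues at the $\sigma_j$ with $\Im\sigma_j<-\gamma$ together with the remaining integral along a line below $-\gamma$ --- so $u\in x^{\gamma+\eps}H^m_b(\Mbar;E)$ for a uniform $\eps>0$ fixed by the finitely many relevant $\sigma_j$ and by $m$.

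Finally, (b). For $\Dom_{\min}\subset\Dom\subset\Dom_{\max}$ the operator $A_\Dom$ is bounded from $\Dom$ (graph norm) into $x^\gamma L^2_b$ and restricts on the closed finite-codimension subspace $\Dom_{\min}$ to the Fredholm operator $A_{\Dom_{\min}}$, hence is itself Fredholm. For \eqref{RelIndexA}, write $\Dom=\Dom_{\min}\oplus V$ with $d=\dim V$; from $\ker A_{\Dom_{\min}}=\ker A_\Dom\cap\Dom_{\min}$ and $R(A_\Dom)=R(A_{\Dom_{\min}})+A(V)$, chasing the induced exact sequences of kernels and cokernels gives $\dim\ker A_\Dom-\dim\operatorname{coker}A_\Dom=\dim\ker A_{\Dom_{\min}}-\dim\operatorname{coker}A_{\Dom_{\min}}+d$. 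The main obstacle throughout is the partial asymptotic expansion of maximal-domain elements --- where the holomorphic Fredholm theory of $\widehat{P}(\sigma)$, the Mellin transform, and the contour shift (with its logarithmic contributions from higher-order poles) have to be combined carefully --- and it is exactly this that \cite{Le97,GiMe01} carry out.
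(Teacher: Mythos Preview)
Your sketch is correct and follows the standard route from \cite{Le97,GiMe01}: Mellin transform, analytic Fredholm theory for the conormal symbol $\widehat{P}(\sigma)$, contour shifting to extract the finite-dimensional singular part $\Sing$, and elementary linear algebra for the relative index formula. The paper itself gives no proof of this theorem at all --- it simply records the statement and attributes it to \cite{Le97,GiMe01} --- so there is nothing to compare against beyond noting that you have reproduced an outline of precisely those cited arguments.

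One small point worth tightening: when you write $\mathcal{M}u(\sigma)=\widehat{P}(\sigma)^{-1}\mathcal{M}(Pu)(\sigma)$, this is literally true only for the frozen operator at $x=0$; for the full $A$ the $x$-dependence of the coefficients $a_{k\alpha}(x,y)$ produces lower-order correction terms under the Mellin transform, which must be absorbed iteratively (or handled via the parametrix you mention). You acknowledge this implicitly by deferring the details to \cite{Le97,GiMe01}, but as written the formula could mislead a reader into thinking the contour shift is a one-step affair rather than the more careful iteration it actually is.
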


By (d) of Theorem~\ref{BasicTheorem} and Theorem~\ref{EmbeddingbCone} we immediately
obtain the following corollary.

\begin{corollary}\label{DomainsSchattenEmb}
Under the assumptions of Theorem~\ref{BasicTheorem} the embedding
$\Dom_{\max} \hookrightarrow x^{\gamma}L^2_b(\Mbar;E)$ belongs to
${\mathfrak S}^+_{n/m}$, see \eqref{Spplus}.

In particular, the embeddings of the domains $\Dom \hookrightarrow
x^{\gamma}L^2_b(\Mbar;E)$ of all closed extensions $A_{\Dom}$ of $A$ in 
$x^{\gamma}L^2_b(\Mbar;E)$ belong to ${\mathfrak S}^+_{n/m}$.
\end{corollary}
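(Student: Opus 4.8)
The plan is to deduce Corollary~\ref{DomainsSchattenEmb} directly from part~(d) of Theorem~\ref{BasicTheorem} together with the Schatten embedding result of Theorem~\ref{EmbeddingbCone} and the elementary properties of the classes ${\mathfrak S}_p$ and ${\mathfrak S}^+_p$ recalled in Section~\ref{FunctAnalResult}. The essential point is that there is room to spare in both the weight and the smoothness index, which is exactly what lets us land in the intersection ${\mathfrak S}^+_{n/m}=\bigcap_{q>n/m}{\mathfrak S}_q$ rather than merely in ${\mathfrak S}_p$ for one particular $p$.

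First I would fix $q > n/m$ and show $\Dom_{\max}\hookrightarrow x^{\gamma}L^2_b(\Mbar;E)$ belongs to ${\mathfrak S}_q$. By Theorem~\ref{BasicTheorem}(d) there is $\eps>0$ with a continuous inclusion $\Dom_{\max}\hookrightarrow x^{\gamma+\eps}H^m_b(\Mbar;E)$, so the embedding in question factors as
\[
\Dom_{\max}\hookrightarrow x^{\gamma+\eps}H^m_b(\Mbar;E)\hookrightarrow x^{\gamma}L^2_b(\Mbar;E).
\]
The second map has strictly larger weight on the left ($\gamma+\eps>\gamma$) and smoothness drop $m-0=m$. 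Since $q>n/m$ means $m>n/q$, Theorem~\ref{EmbeddingbCone} (applied with $s=m$, $s'=0$, and the weights $\gamma+\eps>\gamma$, noting $s-s'=m>n/q$) gives that this second embedding lies in ${\mathfrak S}_q$. Composing a continuous map with an ${\mathfrak S}_q$ map stays in ${\mathfrak S}_q$ by the operator ideal property, so $\Dom_{\max}\hookrightarrow x^{\gamma}L^2_b(\Mbar;E)$ is in ${\mathfrak S}_q$. As $q>n/m$ was arbitrary, this embedding lies in ${\mathfrak S}^+_{n/m}$.

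For the ``in particular'' claim: if $\Dom_{\min}\subset\Dom\subset\Dom_{\max}$ is any intermediate domain, then the inclusion $\Dom\hookrightarrow x^{\gamma}L^2_b(\Mbar;E)$ factors as $\Dom\hookrightarrow\Dom_{\max}\hookrightarrow x^{\gamma}L^2_b(\Mbar;E)$, where the first map is continuous (both carry the graph norm of $A$, and $\Dom$ is a closed subspace of $\Dom_{\max}$ by Theorem~\ref{BasicTheorem}(a)). Again by the operator ideal property, composing the continuous inclusion with the ${\mathfrak S}^+_{n/m}$ embedding of $\Dom_{\max}$ yields that $\Dom\hookrightarrow x^{\gamma}L^2_b(\Mbar;E)$ is in ${\mathfrak S}^+_{n/m}$. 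There is no real obstacle here; the only thing to be a little careful about is matching the index bookkeeping in Theorem~\ref{EmbeddingbCone} — one needs $s-s'>n/q$ for every $q>n/m$, which holds precisely because $s-s'=m$, so the corollary produces membership in the intersection ${\mathfrak S}^+_{n/m}$ but not, in general, in ${\mathfrak S}_{n/m}$ itself. For a manifold with boundary one would invoke Corollary~\ref{EmbeddingbConeBdry} in place of Theorem~\ref{EmbeddingbCone}, but the argument is otherwise identical.
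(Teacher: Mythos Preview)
Your argument is correct and is exactly the paper's approach: the paper simply points to Theorem~\ref{BasicTheorem}(d) and Theorem~\ref{EmbeddingbCone} and declares the corollary immediate, and you have spelled out precisely this factorization and the index bookkeeping that makes it work. The aside about Corollary~\ref{EmbeddingbConeBdry} is unnecessary here (Corollary~\ref{DomainsSchattenEmb} is stated in the boundaryless setting), but harmless.
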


In the study of rays of minimal growth for closed extensions of $A$ the
normal operator $A_{\wedge}$ associated with $A$ plays a significant role.
$A_{\wedge}$ is an operator acting in sections on the inward pointing half
of the normal bundle of $Y$ in $\Mbar$.
More precisely, $A_{\wedge}$ is defined as follows:

We first note that any choice of defining function $x$ for $Y$ trivializes
the normal bundle $NY$ to $Y\times\R$. $x$ induces the map $x_{\wedge} = dx$ on
$NY$, and the trivialization $NY \cong Y\times\R$ then is such that $x_{\wedge}$
corresponds to the projection on the second coordinate on $Y\times\R$.
To simplify notation, we will just write $x$ for $x_{\wedge}$ from now on. Let
$Y^{\wedge} = Y\times\overline{\R}_+$ be the inward pointing half of the normal bundle. 
The bundle $E|_Y$ lifts to $Y^{\wedge}$ and carries a natural Hermitian metric and 
connection induced by the metric and connection given on $E$. As is custom in the
literature on cone operators, this bundle on $Y^{\wedge}$ is for sake of simplicity
also denoted by $E$.
On $Y^{\wedge}$ we consider the $b$-density $\frac{dx}{x}\otimes\m_Y$ with a fixed
density $\m_Y$ on $Y$ (lifted to $Y^{\wedge}$).

Choose a collar neighborhood $U$ of $Y$ in $\Mbar$. Pull-back and parallel transport
induce an isomorphism $x^{\gamma}L^2_b(U;E) \cong x^{\gamma}L^2_b(Y\times[0,\eps);E)$
for $\eps > 0$ small enough (choosing the defining function $x$ and the collar
neighborhood $U$ properly even produces a unitary map, but this will not be essential
for us here). Hence, locally near $Y$, $L^2$-sections of $E$ on $\Mbar$ can be
identified with $L^2$-sections of $E$ on $Y^{\wedge}$. This identification extends
to distributional sections and restricts to smooth sections (with compact support).
So, if $A \in x^{-m}\Diff^m_b(\Mbar;E)$, then near $Y$ we can now write
\begin{equation}\label{Arepresentation}
A = x^{-m}\sum\limits_{k=0}^ma_k(x)(xD_x)^k
\end{equation}
with $a_k \in C^{\infty}([0,\eps),\Diff^{m-k}(Y;E|_Y))$.
The normal operator associated with $A$ is the operator
\begin{equation}\label{normalop}
A_{\wedge} = x^{-m}\sum\limits_{k=0}^ma_k(0)(xD_x)^k : C_c^{\infty}(\open Y^{\wedge};E)
\to C^{\infty}(\open Y^{\wedge};E).
\end{equation}
For every $\gamma \in \R$, $A_{\wedge}$ is an unbounded operator
$$
A_{\wedge} : C_c^{\infty}(\open Y^{\wedge};E) \subset x^{\gamma}L^2_b(Y^{\wedge};E) \to
x^{\gamma}L^2_b(Y^{\wedge};E).
$$
Like $A$, $A_{\wedge}$ has the canonical closed minimal and maximal extensions
$\Dom_{\wedge,\min}$ and $\Dom_{\wedge,\max}$.
There is a natural isomorphism
\begin{equation}\label{thetaiso}
\theta : \Dom_{\max}/\Dom_{\min} \to \Dom_{\wedge,\max}/\Dom_{\wedge,\min}
\end{equation}
constructed in \cite{GKM1,GKM2} (and subsequently reviewed in \cite{GKM3,GKM4}).
Without going into further technical details, we just note that the construction of
$\theta$ follows a simple algorithm of $m$ steps, where $m$ is the order of $A$.
It involves the first $m$ Taylor coefficients of the expansions of the $a_k(x)$ in
\eqref{Arepresentation} (that is to say the conormal symbols of the operator $A$
up to order $m$). In the special case that $A$ has constant coefficients, i.e.
the $a_k(x)$ are independent of $x$ for small $x$, we simply have
$\theta\bigl(u + \Dom_{\min}) = \omega u + \Dom_{\wedge,\min}$, where
$\omega \in C_c^{\infty}([0,\eps))$ is a cut-off function near $x = 0$ that
we consider a function on $\Mbar$ supported near $Y$ (this representation of $\theta$
involves passage for functions on $\Mbar$ supported near $Y$ to functions on
$Y^{\wedge}$ as was discussed earlier).

Using \eqref{thetaiso} we can associate with any domain
$\Dom_{\min} \subset \Dom \subset \Dom_{\max}$ for $A$ a domain
$\Dom_{\wedge,\min} \subset \Dom_{\wedge} \subset \Dom_{\wedge,\max}$ for
$A_{\wedge}$ via
\begin{equation}\label{assocdomain}
\Dom_{\wedge}/\Dom_{\wedge,\min} = \theta\bigl(\Dom/\Dom_{\min}\bigr).
\end{equation}
Now let $\Gamma = \{re^{i\theta};\; r \geq 0\} \subset \C$ be a ray. The following
theorem, proved in \cite{GKM2}, gives verifiable criteria for $\Gamma$ to be
a ray of minimal growth for the closed extension $A_{\Dom}$ of an elliptic cone
operator.

\begin{theorem}\label{Rayofminimalgrowth}
Let $A \in x^{-m}\Diff_b^m(\Mbar;E)$, $m > 0$, be $c$-elliptic, and let
$A_{\Dom}$ be any closed extension of $A$ in $x^{\gamma}L^2_b(\Mbar;E)$.
We assume that
\begin{itemize}
\item $A$ is $c$-elliptic with parameter in $\Gamma$, i.e. the $c$-principal
symbol $\csym(A)$ does not have spectrum in $\Gamma$;
\item $\Gamma$ is a ray of minimal growth for the closed extension
$A_{\wedge,\Dom_{\wedge}}$ of the normal operator $A_{\wedge}$ in
$x^{\gamma}L^2_b(Y^{\wedge};E)$, where $\Dom_{\wedge}$ is the associated
domain to $\Dom$ according to \eqref{assocdomain}.
\end{itemize}
Then $\Gamma$ is a ray of minimal growth for $A_{\Dom}$.
\end{theorem}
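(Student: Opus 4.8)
The plan is to construct, for $\lambda\in\Gamma$ with $|\lambda|$ large, a two-sided parametrix $B(\lambda):x^{\gamma}L^2_b(\Mbar;E)\to\Dom$ with
\[
\|B(\lambda)\|_{\L(x^{\gamma}L^2_b(\Mbar;E))}=O(|\lambda|^{-1}),\qquad (A-\lambda)B(\lambda)=I-R_1(\lambda),\qquad B(\lambda)(A_{\Dom}-\lambda)=I-R_2(\lambda),
\]
where $\|R_j(\lambda)\|\to 0$ as $|\lambda|\to\infty$ along $\Gamma$. Granting this, a Neumann series argument makes $I-R_j(\lambda)$ invertible for large $|\lambda|$, so $A_{\Dom}-\lambda$ is invertible with $(A_{\Dom}-\lambda)^{-1}=(I-R_2(\lambda))^{-1}B(\lambda)$, and the resolvent bound follows from the bound on $B(\lambda)$. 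Thus everything reduces to building $B(\lambda)$ and estimating the remainders, and the two hypotheses of the theorem are precisely what supply, respectively, an interior parametrix and a boundary parametrix.

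First I would build the two pieces. On a neighborhood of $\open\Mbar$ the hypothesis that $\csym(A)$ has no spectrum on $\Gamma$ — $c$-ellipticity with parameter — says exactly that $A-\lambda$ is parameter-elliptic in the usual sense, so the parameter-dependent pseudodifferential calculus furnishes an interior parametrix $B_{\mathrm I}(\lambda)$ with $\|B_{\mathrm I}(\lambda)\|=O(|\lambda|^{-1})$ — and, as usual, with an extra gain of $k$ derivatives at the cost of a factor $|\lambda|^{k/m}$ — and with a smoothing, rapidly $\lambda$-decaying remainder. Near $Y$ one passes through the collar to $Y^{\wedge}$ and exploits the dilation structure of the normal operator: the map $\kappa_{\varrho}u(x,y)=\varrho^{-\gamma}u(\varrho x,y)$ is unitary on $x^{\gamma}L^2_b(Y^{\wedge};E)$, preserves $\Dom_{\wedge,\min}$ and $\Dom_{\wedge,\max}$, and by \eqref{normalop} satisfies $\kappa_{\varrho}^{-1}A_{\wedge}\kappa_{\varrho}=\varrho^{m}A_{\wedge}$. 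Writing $\lambda=\varrho^{m}\mu$ with $\varrho=|\lambda|^{1/m}$ and $|\mu|=1$ this gives
\[
(A_{\wedge,\Dom_{\wedge}}-\lambda)^{-1}=\varrho^{-m}\,\kappa_{\varrho}\bigl(A_{\wedge,\kappa_{\varrho}^{-1}\Dom_{\wedge}}-\mu\bigr)^{-1}\kappa_{\varrho}^{-1},
\]
so the hypothesis that $\Gamma$ be a ray of minimal growth for $A_{\wedge,\Dom_{\wedge}}$ is equivalent to the uniform bound $\|(A_{\wedge,\kappa_{\varrho}^{-1}\Dom_{\wedge}}-\mu)^{-1}\|=O(1)$ as $\varrho\to\infty$; hence the operator $B_{\wedge}(\lambda):=(A_{\wedge,\Dom_{\wedge}}-\lambda)^{-1}$, transported to a collar of $Y$ in $\Mbar$, obeys $\|B_{\wedge}(\lambda)\|=O(|\lambda|^{-1})$, and parameter-elliptic regularity for $A_{\wedge}$ (again using the first hypothesis, since $\csym(A_{\wedge})=\csym(A)|_{Y}$) upgrades this to the analogous estimates into the spaces $H^k_b$. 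One then glues with cut-off functions $\omega_0,\omega,\omega'$ near $Y$, chosen so that $\omega\equiv 1$ near the support of $\omega_0$ and the two pieces overlap harmlessly, setting $B(\lambda)=\omega_0B_{\wedge}(\lambda)\omega+(1-\omega_0)B_{\mathrm I}(\lambda)(1-\omega')$.

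Finally I would check the two parametrix identities modulo small errors; three kinds of remainder occur. The commutators $[A,\omega_0]$, $[A,\omega']$ are supported where the cut-offs vary, hence on a region $\{x\ge c\}$ with $c>0$ on which $A-\lambda$ is genuinely parameter-elliptic, and when composed with a resolvent-type factor they are $O(|\lambda|^{-1/m})$ in norm by the refined parameter-dependent estimates, so they tend to $0$. The difference between $A$ and its normal operator near $Y$ is $A-A_{\wedge}=x^{-m}\sum_k\bigl(a_k(x)-a_k(0)\bigr)(xD_x)^k$ with $a_k(x)-a_k(0)=O(x)$ by \eqref{Arepresentation}; under the rescaling by $\varrho$ this difference acquires a relative factor $O(\varrho^{-1})=O(|\lambda|^{-1/m})$, so the corresponding remainder also vanishes in the limit. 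The delicate point — and the main obstacle — is to ensure that $B(\lambda)$ maps into $\Dom$ itself and not merely into $\Dom_{\max}$, and, symmetrically, that the left identity holds on $\Dom$: the boundary piece $\omega_0B_{\wedge}(\lambda)\omega$ lands by construction in $\kappa_{\varrho}^{-1}\Dom_{\wedge}$, and matching this, after transport to $\Mbar$, against $\Dom$ forces one to use the isomorphism $\theta$ of \eqref{thetaiso} together with the definition \eqref{assocdomain} of the associated domain. For $x$-independent coefficients this is immediate, because there $\theta(u+\Dom_{\min})=\omega u+\Dom_{\wedge,\min}$ and so $\omega_0B_{\wedge}(\lambda)\omega$ lies in $\Dom$ modulo $\Dom_{\min}$; in general one must run the $m$-step algorithm defining $\theta$ through the first $m$ conormal symbols of $A$, and this bookkeeping — combined with the fact that $\Dom_{\wedge}$ need not be dilation-invariant, which is exactly why the $\varrho$-dependent family $\kappa_{\varrho}^{-1}\Dom_{\wedge}$ is what one actually controls — is the technical core of the proof. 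The residual $\Dom_{\min}$-contributions and the interior piece lie in $\Dom$ by Theorem~\ref{BasicTheorem}, and assembling all the estimates completes the argument; this is carried out in detail in \cite{GKM2}.
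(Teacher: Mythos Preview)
The paper does not give its own proof of this theorem; it is stated as a result quoted from \cite{GKM2} and used as input. Your proposal is a faithful outline of the parametrix construction carried out in \cite{GKM2} --- interior parameter-dependent parametrix plus rescaled normal-operator resolvent near $Y$, glued with cut-offs, with remainders controlled via commutators, the Taylor remainder $A-A_{\wedge}$, and the domain-matching through $\theta$ --- and you correctly identify the domain bookkeeping as the technical heart. Since you also end by deferring the details to \cite{GKM2}, your treatment is consistent with (and more informative than) the paper's, which simply cites the result.
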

The second assumption on the normal operator can be phrased conveniently
in geometric terms that involve the action
\begin{equation}\label{kappaaction}
\kappa_{\varrho}u(x,y) = u(\varrho x,y), \varrho > 0,
\end{equation}
that is defined for functions on $Y^{\wedge}$ (for sections of bundles
the definition of this action involves in addition parallel transport in
the fibres). Since both $\Dom_{\wedge,\max}$ and $\Dom_{\wedge,\min}$ are
invariant with respect to this action, it descends to an action on the quotient
$\Dom_{\wedge,\max}/\Dom_{\wedge,\min}$ and therefore induces flows on the
various Grassmannians of its subspaces.

For any domain $\Dom_{\wedge}$ let $\Omega^{-}(\Dom_{\wedge})$ consist of
all domains $\tilde{\Dom}_{\wedge}$ of closed extensions of $A_{\wedge}$ such that
$\dim\tilde{\Dom}_{\wedge}/\Dom_{\wedge,\min} = \dim\Dom_{\wedge}/\Dom_{\wedge,\min}$,
so these quotient spaces belong to the same Grassmannian, and such that there exists a
sequence $\varrho_k \to 0$ such that
$$
\kappa_{\varrho_k}\bigl(\Dom_{\wedge}/\Dom_{\wedge,\min}\bigr) \to
\tilde{\Dom}_{\wedge}/\Dom_{\wedge,\min} \textup{ as } k \to \infty
$$
in that Grassmannian. It was shown in \cite{GKM5} that $\Omega^{-}(\Dom_{\wedge})$
has topologically the structure of an embedded torus.

Now, provided that $A$ is $c$-elliptic with parameter in $\Gamma$, it was proved
in \cite{GKM1,GKM3} that $\Gamma$ is a ray of minimal growth for $A_{\wedge}$
with domain $\Dom_{\wedge}$ if and only if, for some $\lambda_0 \in \Gamma$,
$A_{\wedge} - \lambda_0 : \tilde{\Dom}_{\wedge} \to x^{\gamma}L^2_b(Y^{\wedge};E)$
is invertible for all $\tilde{\Dom}_{\wedge} \in \Omega^{-}(\Dom_{\wedge})$.

\bigskip

\noindent
Let us now proceed with the corresponding discussion for realizations of
elliptic boundary value problems on manifolds with conical singularities.

Let $\Mbar$ be a compact $n$-manifold with corners of codimension two, and
let $\partial\Mbar = \partial_{\reg}\Mbar \cup \partial_{\sing}\Mbar$, where
both $\partial_{\reg}\Mbar$ and $\partial_{\sing}\Mbar$ are smooth manifolds
with boundary as described in Section~\ref{ConeSobolevSpacesEmbeddings} before
Corollary~\ref{EmbeddingbConeBdry}.
Let $2\Mbar_{\reg}$ be the double of $\Mbar$ across $\partial_{\reg}\Mbar$.
$2\Mbar_{\reg}$ is a smooth compact manifold with boundary $2\partial_{\sing}\Mbar$,
the double of $\partial_{\sing}\Mbar$ across its boundary.

We obtain the relevant objects on $\Mbar$ by restriction of the corresponding
objects from $2\Mbar_{\reg}$. For example, the $c$-cotangent bundle
$\cT^*\Mbar$ is by definition $\cT^*2\Mbar_{\reg}|_{\Mbar}$.
Similarly, we consider (Hermitian) vector bundles $E \to \Mbar$ that are restrictions
of (Hermitian) vector bundles from $2\Mbar_{\reg}$.
Let $x$ be a defining function for $2\partial_{\sing}\Mbar$ in $2\Mbar_{\reg}$.
For any $m \in \N_0$ and vector bundles $E$ and $F$, let
$x^{-m}\Diff_b^m(\Mbar;E,F)$ be the space of cone differential operators of order $m$
on $\Mbar$ acting from sections of the bundle $E$ to sections of $F$.
Every operator in this space is obtained by restricting a corresponding cone
differential operator from the double $2\Mbar_{\reg}$ to $\Mbar$.

In the sequel, we fix an operator $A \in x^{-m}\Diff_b^m(\Mbar;E)$, $m > 0$, and
a collection of operators $B_j \in x^{-m_j}\Diff_b^{m_j}(\Mbar;E,F_j)$,
$m_j < m$, $j = 1,\ldots,N$, and consider the following boundary value problem
with spectral parameter $\lambda \in \Gamma = \{re^{i\theta};\;r \geq 0\} \subset \C$:
\begin{equation}
\left.\begin{gathered}
(A - \lambda)u = f \textup{ in } \open\Mbar, \\
Tu = \begin{pmatrix} {\mathfrak r}_{\partial_{\reg}\Mbar}\circ B_1u \\ \vdots \\
{\mathfrak r}_{\partial_{\reg}\Mbar}\circ B_Nu \end{pmatrix} = 0 \textup{ on }
\partial_{\reg}\Mbar,
\end{gathered}\right\}
\end{equation}
where ${\mathfrak r}_{\partial_{\reg}\Mbar} : v \mapsto v|_{\partial_{\reg}\Mbar}$ is
the trace operator. More precisely, for any given weight $\gamma \in \R$,
we consider the spectral problem for the operator $A_T$ in
$x^{\gamma}L^2_b(\Mbar;E)$ that acts like $A$ with domain $\Dom(A_T)$,
where $\Dom(A_T)$ is any intermediate space
$\Dom_{\min}(A_T) \subset \Dom(A_T) \subset \Dom_{\max}(A_T)$, and
\begin{align*}
\Dom_{\max}(A_T) &= \{u \in x^{\gamma}H^m_b(\Mbar;E);\; Au \in x^{\gamma}L^2_b(\Mbar;E)
\textup{ and } Tu = 0 \textup{ on } \partial_{\reg}\Mbar\}, \\
\Dom_{\min}(A_T) &= \Dom_{\max}(A_T) \cap
\bigcap_{\varepsilon > 0}x^{\gamma+m-\varepsilon}H^m_b(\Mbar;E).
\end{align*}
We henceforth assume that
\begin{equation}\label{AssumptionsBVPs}
\left.
\mbox{
\parbox{11cm}{
\begin{itemize}
\item $A$ is $c$-elliptic with parameter in $\Gamma$, i.e., $\csym(A) - \lambda$ is
invertible everywhere on $\bigl(\cT^*\Mbar\times\Gamma\bigr)\setminus 0$;
\item the $c$-principal boundary symbol with parameter
$$
\begin{pmatrix} \csym_{\partial}(A) - \lambda \\ \csym_{\partial}(T) \end{pmatrix} :
{}^c{\mathscr S}_+ \otimes \cpi^*E|_{\partial_{\reg}\Mbar} \to
\begin{array}{c}
{}^c{\mathscr S}_+ \otimes \cpi^*E|_{\partial_{\reg}\Mbar} \\
\oplus \\
\bigoplus_{j=1}^N \cpi^*F_j|_{\partial_{\reg}\Mbar}
\end{array}
$$
is invertible on $\bigl(\cT^*\partial_{\reg}\Mbar \times \Gamma\bigr) \setminus 0$,
where $\cpi : \cT^*\partial_{\reg}\Mbar \to \partial_{\reg}\Mbar$ is the canonical
projection.
\end{itemize}
}
}
\right\}
\end{equation}
We proceed to explain the notion of $c$-principal boundary symbol from
\eqref{AssumptionsBVPs}.
Let $y_1$ be a defining function for $\partial_{\reg}\Mbar$ such that
$dx\wedge dy_1 \neq 0$ on $\partial_{\reg}\Mbar \cap \partial_{\sing}\Mbar$.
In local coordinates near $\partial_{\reg}\Mbar$ write
$$
\csym(A) = \sum_{j+|\alpha|=m}a_{j,\alpha}(y',y_1)\eta'^{\alpha}\eta_1^j.
$$
Then the $c$-principal boundary symbol of $A$ is
$$
\csym_{\partial}(A) =
\sum_{j+|\alpha|=m}a_{j,\alpha}(y',0)\eta'^{\alpha}D_{y_1}^j :
{\mathscr S}(\overline{\mathbb R}_+)\otimes\C^K \to
{\mathscr S}(\overline{\mathbb R}_+)\otimes\C^K,
$$
where $K = \dim E$. Globally this leads to
$$
\csym_{\partial}(A) : {}^c{\mathscr S}_+ \otimes \cpi^*E|_{\partial_{\reg}\Mbar} \to
{}^c{\mathscr S}_+ \otimes \cpi^*E|_{\partial_{\reg}\Mbar},
$$
where ${}^c{\mathscr S}_+ \to \cT^*\partial_{\reg}\Mbar$ is a vector bundle
with fiber ${\mathscr S}(\overline{\mathbb R}_+)$.

Analogously, we have
$$
\csym_{\partial}(B_j) : {}^c{\mathscr S}_+ \otimes \cpi^*E|_{\partial_{\reg}\Mbar} \to
{}^c{\mathscr S}_+ \otimes \cpi^*F_j|_{\partial_{\reg}\Mbar}, \quad
j = 1,\ldots,N.
$$
Let $\csym_{\partial}({\mathfrak r}_{\partial_{\reg}\Mbar}) : u \to u(0)$ fiberwise
in ${}^{c}{\mathscr S}_+$. Combined this gives
$$
\csym_{\partial}(T) = \begin{pmatrix}
\csym_{\partial}({\mathfrak r}_{\partial_{\reg}\Mbar})\circ
\csym_{\partial}(B_1) \\ \vdots \\
\csym_{\partial}({\mathfrak r}_{\partial_{\reg}\Mbar})\circ
\csym_{\partial}(B_N) \end{pmatrix} :
{}^c{\mathscr S}_+ \otimes \cpi^*E|_{\partial_{\reg}\Mbar} \to
\bigoplus_{j=1}^N \cpi^*F_j|_{\partial_{\reg}\Mbar},
$$
the $c$-principal boundary symbol of the boundary condition $T$.

\medskip

For the discussion of rays of minimal growth, we also need to impose
a parameter-dependent ellipticity condition that is associated with
$\overline{Y} = \partial_{\sing}\Mbar$.
This condition involves the normal operator $A_{\wedge}$ of $A$ and a corresponding
normal boundary value problem $T_{\wedge}$ for $A_{\wedge}$ on $\overline{Y}^{\wedge}$.
The definition of $A_{\wedge}$ is exactly like in \eqref{normalop}. Likewise, there
are normal operators $B_{j,\wedge}$ associated with the operators $B_j$, $j=1,\ldots,N$.
The normal operator associated with $T$ is then
$$
T_{\wedge} =
\begin{pmatrix}
{\mathfrak r_{(\partial\overline{Y})^{\wedge}}} \circ B_{1,\wedge}
\\
\vdots
\\
{\mathfrak r_{(\partial\overline{Y})^{\wedge}}} \circ B_{N,\wedge}
\end{pmatrix} :
C_c^{\infty}(\overline{Y}^{\wedge};E) \to
C_c^{\infty}((\partial\overline{Y})^{\wedge};\bigoplus_{j=1}^NF_j).
$$
For the previously fixed weight $\gamma \in \R$ we consider the spectral problem for
the realizations of $A_{\wedge}$ subject to $T_{\wedge}u = 0$ in
$x^{\gamma}L^2_b(\overline{Y}^{\wedge};E)$. More precisely, we consider the operator
$A_{\wedge,T_{\wedge}}$ that acts like $A_{\wedge}$ with domain
$\Dom_{\wedge}(A_{\wedge,T_{\wedge}})$, where
$\Dom_{\wedge}(A_{\wedge,T_{\wedge}})$ is any intermediate space
$\Dom_{\wedge,\min}(A_{\wedge,T_{\wedge}}) \subset \Dom_{\wedge}(A_{\wedge,T_{\wedge}})
\subset \Dom_{\wedge,\max}(A_{\wedge,T_{\wedge}})$. Here
\begin{align*}
\Dom_{\wedge,\max}(A_{\wedge,T_{\wedge}}) &= \{u \in
\K^{m,\gamma}(\overline{Y}^{\wedge};E)_{\gamma};\; A_{\wedge}u \in x^{\gamma}L^2_b(\overline{Y}^{\wedge};E) \textup{ and } T_{\wedge}u = 0\}, \\
\Dom_{\wedge,\min}(A_{\wedge,T_{\wedge}}) &= \Dom_{\wedge,\max}(A_{\wedge,T_{\wedge}})
\cap \bigcap_{\varepsilon > 0}
\K^{m,\gamma+m-\varepsilon}(\overline{Y}^{\wedge};E)_{\gamma},
\end{align*}
and for $s,\delta,\delta' \in \R$,
$$
\K^{s,\delta}(\overline{Y}^{\wedge})_{\delta'} =
\omega x^{\delta}H^s_b(\overline{Y}^{\wedge}) +
(1-\omega) x^{\delta'+n/2}H^s_{\textup{cone}}(\overline{Y}^{\wedge})
$$
is a weighted cone Sobolev space on $\overline{Y}^{\wedge}$, see
\cite{SchulzeNH,EgorovKondratievSchulze2,KrainBVPCone}. Here
$\omega \in C_c^{\infty}(\overline{\R}_+)$ is a cut-off function near zero.

It was shown in \cite{KrainBVPCone} that under our present assumptions
\eqref{AssumptionsBVPs} there exists a natural isomorphism
$$
\theta : \Dom_{\max}(A_T)/\Dom_{\min}(A_T) \to
\Dom_{\wedge,\max}(A_{\wedge,T_{\wedge}})/\Dom_{\wedge,\min}(A_{\wedge,T_{\wedge}})
$$
similar to \eqref{thetaiso} that allows passage from domains $\Dom(A_T)$
of realizations of $A$ subject to $Tu = 0$ to associated domains
$\Dom_{\wedge}(A_{\wedge,T_{\wedge}})$ of realizations of $A_{\wedge}$ subject
to $T_{\wedge}u = 0$ via
\begin{equation}\label{assocdomain1}
\Dom_{\wedge}(A_{\wedge,T_{\wedge}})/\Dom_{\wedge,\min}(A_{\wedge,T_{\wedge}}) =
\theta\bigl(\Dom(A_T)/\Dom_{\min}(A_T)\bigr),
\end{equation}
see also \eqref{assocdomain}. Moreover, the quotient spaces
$\Dom_{\max}(A_T)/\Dom_{\min}(A_T)$ and correspondingly
$\Dom_{\wedge,\max}(A_{\wedge,T_{\wedge}})/\Dom_{\wedge,\min}(A_{\wedge,T_{\wedge}})$
are finite dimensional.

\medskip

In addition to \eqref{AssumptionsBVPs} we will require the following
parameter-dependent ellipticity condition associated with $\partial_{\sing}\Mbar$:
\begin{equation}\label{ParamEllSingBdry}
\mbox{
\parbox{9cm}{
The ray $\Gamma$ is a ray of minimal growth for $A_{\wedge,T_{\wedge}}$ with
the associated domain $\Dom_{\wedge}(A_{\wedge,T_{\wedge}})$ to $\Dom(A_T)$
according to \eqref{assocdomain1}.}
}
\end{equation}
The following theorem is the main result of \cite{KrainBVPCone}.

\begin{theorem}\label{RayofminimalgrowthBVP}
Consider the realization $A_T$ of $A$ subject to the boundary condition $Tu = 0$
on $\Mbar$ in $x^{\gamma}L^2_b(\Mbar;E)$ with domain $\Dom(A_T)$, where
$\Dom_{\min}(A_T) \subset \Dom(A_T) \subset \Dom_{\max}(A_T)$, and let
$\Gamma = \{re^{i\theta};\; r \geq 0\} \subset \C$ be a ray.
Assume that the parameter-dependent ellipticity conditions \eqref{AssumptionsBVPs}
and \eqref{ParamEllSingBdry} are fulfilled.

Then $\Gamma$ is a ray of minimal growth for the operator $A_T : \Dom(A_T)
\to x^{\gamma}L^2_b(\Mbar;E)$.
\end{theorem}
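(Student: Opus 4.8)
The plan is to follow the strategy of the boundaryless case in \cite{GKM1,GKM2,GKM3} and construct a parameter-dependent approximate inverse for $A_T - \lambda$, $\lambda \in \Gamma$, $|\lambda| \to \infty$, that is exact modulo an operator-norm remainder which is $o(1)$ on $x^{\gamma}L^2_b(\Mbar;E)$, and then to invert it by a Neumann series. The approximate inverse is assembled from two pieces: one governing the behavior away from $\partial_{\sing}\Mbar$ and the regular boundary, and one governing the behavior near the conical singularity $\overline{Y} = \partial_{\sing}\Mbar$.

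First I would use the parameter-dependent ellipticity \eqref{AssumptionsBVPs} to build, within the parameter-dependent Boutet de Monvel calculus for the regular boundary conditions combined with the $b$-calculus in the direction normal to $\overline{Y}$, an operator family $B_0(\lambda)$ that maps $x^{\gamma}L^2_b(\Mbar;E)$ into $\Dom_{\max}(A_T)$ and satisfies $(A_T - \lambda)B_0(\lambda) = I + G_0(\lambda)$, where $G_0(\lambda)$ is of order $O(|\lambda|^{-\infty})$ microlocally away from $\overline{Y}$ but is only controlled --- not small --- in a collar $[0,\eps) \times \overline{Y}$. The $c$-ellipticity with parameter gives invertibility of $\csym(A) - \lambda$, and the invertibility of the $c$-principal boundary symbol with parameter on $\cT^*\partial_{\reg}\Mbar \times \Gamma$ gives the Shapiro--Lopatinskij inverse on $\partial_{\reg}\Mbar$; together these yield $B_0(\lambda)$ with the usual $O(|\lambda|^{-1})$ bound.

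Next, the remainder $G_0(\lambda)$ near $\overline{Y}$ is modeled by the normal operator. Using that $A_{\wedge}$ is $\kappa$-homogeneous of degree $-m$, i.e.\ $\kappa_{\varrho}A_{\wedge} = \varrho^{-m}A_{\wedge}\kappa_{\varrho}$ with the action \eqref{kappaaction}, the rescaling $\varrho = |\lambda|^{-1/m}$ turns the large-$|\lambda|$ behavior of $A_T - \lambda$ near $x = 0$ into the resolvent problem for $A_{\wedge,T_\wedge} - \lambda_0$ at a fixed $\lambda_0 \in \Gamma$ with $|\lambda_0| = 1$. Here \eqref{ParamEllSingBdry} enters: the hypothesis that $\Gamma$ is a ray of minimal growth for $A_{\wedge,T_\wedge}$ with the associated domain $\Dom_\wedge(A_{\wedge,T_\wedge})$ of \eqref{assocdomain1} provides the needed bounded inverse, and --- because the $\kappa_{\varrho}$-flow as $\varrho \to 0$ drifts the associated domain through its orbit --- the uniform invertibility of $A_{\wedge} - \lambda_0$ over all domains in $\Omega^{-}(\Dom_\wedge(A_{\wedge,T_\wedge}))$, which, as recalled in the text following \cite{GKM1,GKM3}, is equivalent to the minimal-growth hypothesis. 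Patching the rescaled resolvent $(A_{\wedge,T_\wedge} - \lambda_0)^{-1}$ with a cutoff near $x = 0$ produces a second family $B_1(\lambda)$, supported near $\overline{Y}$, that corrects $G_0(\lambda)$ there; the fact that $B_1(\lambda)$ respects the prescribed intermediate domain $\Dom(A_T)$, and not merely $\Dom_{\max}(A_T)$, is dictated by the isomorphism $\theta$ of \eqref{assocdomain1}, which matches the singular asymptotics of elements of $\Dom(A_T)$ with those of $\Dom_\wedge(A_{\wedge,T_\wedge})$. Setting $B(\lambda) = B_0(\lambda) + B_1(\lambda)$ gives $B(\lambda) : x^{\gamma}L^2_b(\Mbar;E) \to \Dom(A_T)$ with $(A_T - \lambda)B(\lambda) = I + R(\lambda)$ and $\|R(\lambda)\|_{\L(x^{\gamma}L^2_b)} \to 0$; a symmetric construction yields a left approximate inverse. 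For $|\lambda|$ large, $I + R(\lambda)$ is invertible by Neumann series, hence $A_T - \lambda$ is invertible and $(A_T - \lambda)^{-1} = B(\lambda)(I + R(\lambda))^{-1} = O(|\lambda|^{-1})$, since both $B_0(\lambda)$ (from the parameter-elliptic calculus) and the $\kappa$-isometrically rescaled $B_1(\lambda)$ contribute the order $|\lambda|^{-1}$; this is exactly the definition of a ray of minimal growth for $A_T$.

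The main obstacle is the uniform analysis near the conical singularity: controlling the remainder $G_0(\lambda)$ in the collar uniformly in $\lambda$ --- which demands the full strength of the $\kappa$-homogeneity together with resolvent bounds for $A_{\wedge,T_\wedge} - \lambda$ along the whole ray and over the orbit $\Omega^{-}(\Dom_\wedge(A_{\wedge,T_\wedge}))$ --- and, simultaneously, ensuring that the correction $B_1(\lambda)$ lands in the prescribed intermediate domain $\Dom(A_T)$, which is a matching-of-asymptotics problem mediated by $\theta$. The Boutet de Monvel bookkeeping for the regular boundary conditions runs in parallel and is, by comparison, routine, although organizing the three calculi --- cone calculus near $\overline{Y}$, Boutet de Monvel near $\partial_{\reg}\Mbar$, and ordinary parameter-dependent pseudodifferential operators in the interior --- into one coherent parametrix requires care.
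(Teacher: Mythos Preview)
The paper does not prove this theorem; it is quoted verbatim as ``the main result of \cite{KrainBVPCone}'' and used as a black box. There is therefore no proof in the present paper to compare your proposal against.

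That said, your sketch is a faithful outline of the strategy actually carried out in \cite{KrainBVPCone} (and, for the boundaryless case, in \cite{GKM1,GKM2}): a parameter-dependent parametrix from the interior/Boutet de Monvel calculus handling the regular boundary, a correction near $\partial_{\sing}\Mbar$ built from the rescaled resolvent of the model operator $A_{\wedge,T_\wedge}$ via $\kappa$-homogeneity, the matching of the singular asymptotics through the isomorphism $\theta$ so that the parametrix lands in the prescribed intermediate domain, and a Neumann-series closure. You have also correctly identified the genuine difficulty, namely the uniform control of the remainder in the collar and the domain-matching through $\theta$; these are exactly the points where \cite{KrainBVPCone} does the real work. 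One small caution: the dilation $\kappa_\varrho$ as written in \eqref{kappaaction} is not unitary on $x^{\gamma}L^2_b$, so in the actual estimates one normalizes it (or tracks the power of $\varrho$) to get the clean $O(|\lambda|^{-1})$ bound; your phrase ``$\kappa$-isometrically rescaled'' suggests you are aware of this, but it should be made explicit in a full write-up.
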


Under the assumptions of Theorem~\ref{RayofminimalgrowthBVP} it was shown
in \cite{KrainBVPCone} that \emph{all} realizations of $A_T$ with domains
between $\Dom_{\min}(A_T)$ and $\Dom_{\max}(A_T)$ are closed operators in
the functional analytic sense, that they are all Fredholm, and, moreover,
that $\Dom_{\max}(A_T) \hookrightarrow x^{\gamma+\varepsilon}H^m_b(\Mbar;E)$ for
a sufficiently small $\varepsilon > 0$.
In view of Corollary~\ref{EmbeddingbConeBdry}, the latter implies the following.

\begin{corollary}\label{EmbeddingDomainsBVP}
Under the assumptions of Theorem~\ref{RayofminimalgrowthBVP}, the embedding
of the domain $\Dom(A_T) \hookrightarrow x^{\gamma}L^2_b(\Mbar;E)$ belongs
to ${\mathfrak S}^+_{n/m}$, see \eqref{Spplus}.
\end{corollary}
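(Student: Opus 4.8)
The plan is to imitate the derivation of Corollary~\ref{DomainsSchattenEmb} from Theorem~\ref{BasicTheorem}(d) and Theorem~\ref{EmbeddingbCone}, replacing the input about $\Dom_{\max}$ by the mapping property of $\Dom_{\max}(A_T)$ stated just before the corollary and Theorem~\ref{EmbeddingbCone} by Corollary~\ref{EmbeddingbConeBdry}, since $\Mbar$ is now a compact $n$-manifold with corners of codimension two.

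First I would note that, under the assumptions \eqref{AssumptionsBVPs} and \eqref{ParamEllSingBdry}, the quotient $\Dom_{\max}(A_T)/\Dom_{\min}(A_T)$ is finite dimensional, so every intermediate space $\Dom_{\min}(A_T) \subset \Dom(A_T) \subset \Dom_{\max}(A_T)$ is closed; moreover $A_T u = Au$ on $\Dom(A_T)$, hence the graph norm on $\Dom(A_T)$ is the restriction of the graph norm on $\Dom_{\max}(A_T)$ and the inclusion $\Dom(A_T) \hookrightarrow \Dom_{\max}(A_T)$ is an isometric embedding of Hilbert spaces. By the result of \cite{KrainBVPCone} quoted just before the corollary there is an $\varepsilon > 0$ together with a continuous inclusion $\Dom_{\max}(A_T) \hookrightarrow x^{\gamma+\varepsilon}H^m_b(\Mbar;E)$. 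Therefore the embedding in question factors as
$$
\Dom(A_T) \hookrightarrow \Dom_{\max}(A_T) \hookrightarrow x^{\gamma+\varepsilon}H^m_b(\Mbar;E) \hookrightarrow x^{\gamma}L^2_b(\Mbar;E),
$$
in which the first two maps are bounded.

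Next I would invoke Corollary~\ref{EmbeddingbConeBdry}: writing $x^{\gamma}L^2_b(\Mbar;E) = x^{\gamma}H^0_b(\Mbar;E)$ and using $\gamma+\varepsilon > \gamma$, the last map above belongs to ${\mathfrak S}_p$ whenever $m > n/p$, that is, for every $p > n/m$; hence it belongs to ${\mathfrak S}^+_{n/m}$, see \eqref{Spplus}. Finally the operator ideal property of the Schatten classes yields that the composition $\Dom(A_T) \hookrightarrow x^{\gamma}L^2_b(\Mbar;E)$ again lies in ${\mathfrak S}^+_{n/m}$, which is the claim. I do not expect any real obstacle here, as the two substantial ingredients --- the smoothing estimate $\Dom_{\max}(A_T) \hookrightarrow x^{\gamma+\varepsilon}H^m_b(\Mbar;E)$ from \cite{KrainBVPCone} and the Schatten-class embedding of weighted $b$-Sobolev spaces on manifolds with corners from Corollary~\ref{EmbeddingbConeBdry} --- are already available; the only point deserving a word of care is to appeal to Corollary~\ref{EmbeddingbConeBdry} rather than to Theorem~\ref{EmbeddingbCone} in the corner setting.
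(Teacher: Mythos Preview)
Your argument is correct and follows essentially the same route as the paper: factor through $\Dom_{\max}(A_T) \hookrightarrow x^{\gamma+\varepsilon}H^m_b(\Mbar;E)$ (from \cite{KrainBVPCone}) and then invoke Corollary~\ref{EmbeddingbConeBdry} together with the ideal property of the Schatten classes. Your care in distinguishing Corollary~\ref{EmbeddingbConeBdry} from Theorem~\ref{EmbeddingbCone} in the corner setting is exactly the point the paper makes.
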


Finally, we note that the assumption \eqref{ParamEllSingBdry} can be checked
effectively using the dilation group $\kappa_{\varrho}$ from \eqref{kappaaction}
and the induced flow on the Grassmannians of subspaces of the quotient
$\Dom_{\wedge,\max}(A_{\wedge,T_{\wedge}})/\Dom_{\wedge,\min}(A_{\wedge,T_{\wedge}})$
analogously to the case of closed extensions of cone operators without
boundary conditions, see the explanation after Theorem~\ref{Rayofminimalgrowth}.

\section{Main theorems and examples}\label{MainTheoremExamples}

\noindent
What remains to be done is to combine the results from the previous sections
to obtain our main Theorems~\ref{MainTheorem} and \ref{MainTheoremBVP} about
the completeness of generalized eigenfunctions for elliptic cone operators.

\begin{theorem}\label{MainTheorem}
Let $\Mbar$ be a smooth compact $n$-manifold with boundary $Y$, and let
$A \in x^{-m}\Diff^m_b(\Mbar;E)$, $m > 0$, be $c$-elliptic.
Fix a weight $\gamma \in \R$, and consider the closed extension
$$
A_{\Dom} : \Dom \subset x^{\gamma}L^2_b(\Mbar;E) \to x^{\gamma}L^2_b(\Mbar;E)
$$
of $A$.
We assume that there are rays
$$
\Gamma_j = \{re^{i\theta_j};\; r \geq 0\}, \quad j = 1,\ldots,J,
$$
in the complex plane such that all angles enclosed by any two adjacent rays are
$\leq \frac{\pi m}{n}$, and such that for any such ray $\Gamma$,
\begin{itemize}
\item $\csym(A) - \lambda$ is invertible on
$\bigl(\cT^*\Mbar\times\Gamma\bigr)\setminus 0$;
\item $\Gamma$ is a ray of minimal growth for
$$
A_{\wedge} : \Dom_{\wedge} \subset x^{\gamma}L^2_b(Y^{\wedge};E) \to
x^{\gamma}L^2_b(Y^{\wedge};E)
$$
for the associated domain $\Dom_{\wedge}$ to $\Dom$ according to \eqref{assocdomain}.
\end{itemize}
Then the system of generalized eigenfunctions of $A_{\Dom}$ is complete in
$x^{\gamma}L^2_b(\Mbar;E)$.
\end{theorem}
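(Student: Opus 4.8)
The plan is to obtain Theorem~\ref{MainTheorem} as a synthesis of three facts already established above: the functional analytic criterion Theorem~\ref{AbstractResult}, the Schatten embedding Corollary~\ref{DomainsSchattenEmb}, and the resolvent criterion Theorem~\ref{Rayofminimalgrowth}. Throughout, write $H = x^{\gamma}L^2_b(\Mbar;E)$. By Theorem~\ref{BasicTheorem}(a), $A_{\Dom}$ is a closed, densely defined operator in $H$ and $\Dom$ carries the graph norm, so the abstract framework applies.

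First I would check that each $\Gamma_j$ is a ray of minimal growth for $A_{\Dom}$. Fix $\Gamma = \Gamma_j$. The two conditions imposed on $\Gamma$ in the statement --- invertibility of $\csym(A)-\lambda$ on $(\cT^*\Mbar\times\Gamma)\setminus 0$, and $\Gamma$ being a ray of minimal growth for $A_{\wedge}$ with the domain $\Dom_{\wedge}$ associated to $\Dom$ via \eqref{assocdomain} --- are exactly the two hypotheses of Theorem~\ref{Rayofminimalgrowth}. That theorem therefore yields that $\Gamma_j$ is a ray of minimal growth for $A_{\Dom}$ for every $j=1,\ldots,J$; in particular $\varrho(A_{\Dom})\neq\emptyset$. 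Next, Corollary~\ref{DomainsSchattenEmb} gives that the embedding $\Dom\hookrightarrow H$ belongs to ${\mathfrak S}^+_{n/m}$, hence to ${\mathfrak S}_q$ for every $q>n/m$.

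It then remains to feed this into Theorem~\ref{AbstractResult}, and here lies the only delicate point: the embedding need not lie in ${\mathfrak S}_{n/m}$ itself, so one cannot invoke Theorem~\ref{AbstractResult} literally with $p=n/m$ against the hypothesized angle bound $\leq \pi m/n = \pi/p$. This is handled by the observation recorded right after Theorem~\ref{AbstractResult}: each ray of minimal growth $\Gamma_j$ is the central axis of a sector $\Lambda_j$ with $\open\Lambda_j\neq\emptyset$, all of whose rays are rays of minimal growth for $A_{\Dom}$. Given $q>n/m$ close enough to $n/m$, the openings of the $\Lambda_j$ exceed $\pi m/n - \pi/q$, so one can adjoin finitely many additional rays taken from the $\Lambda_j$ to the family $\Gamma_1,\ldots,\Gamma_J$ in such a way that the enlarged finite family still consists of rays of minimal growth for $A_{\Dom}$ and all angles enclosed by adjacent rays are $\leq \pi/q$. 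Applying Theorem~\ref{AbstractResult} with $p=q$ and this enlarged family then shows that $\Eig(A_{\Dom})$ is dense in $H$, which is the asserted completeness. (Equivalently, one simply invokes the ${\mathfrak S}^+_p$-version of Theorem~\ref{AbstractResult} with $p=n/m$, which encapsulates exactly this bridging step.)

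Since every step is a citation of a prior result, there is no genuine obstacle; the one place needing care is the bookkeeping of the last paragraph, namely reconciling the borderline Schatten exponent $n/m$ --- attained only in the ${\mathfrak S}^+$ sense --- with the borderline angular condition $\leq \pi m/n$, which is possible only because rays of minimal growth occur in open sectors. I would spell out that reconciliation rather than leave it to the remark.
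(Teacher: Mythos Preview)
Your proposal is correct and follows exactly the approach the paper intends: the paper explicitly states that Theorems~\ref{MainTheorem} and \ref{MainTheoremBVP} are obtained simply by combining Theorem~\ref{AbstractResult} (in its ${\mathfrak S}^+_p$ form noted just after it), Corollary~\ref{DomainsSchattenEmb}, and Theorem~\ref{Rayofminimalgrowth}. Your careful spelling-out of the borderline-exponent reconciliation is precisely the content of the remark following Theorem~\ref{AbstractResult}, and the paper relies on it implicitly rather than repeating it.
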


As was pointed out after Theorem~\ref{Rayofminimalgrowth}, we note that the assumption
that $\Gamma$ be a ray of minimal growth for $A_{\wedge}$ with domain $\Dom_{\wedge}$
can be checked effectively using the dilation group $\kappa_{\varrho}$ from
\eqref{kappaaction} and the induced flow on the Grassmannian of subspaces of
the quotient $\Dom_{\wedge,\max}/\Dom_{\wedge,\min}$ that contains
the subspace $\Dom_{\wedge}/\Dom_{\wedge,\min}$. We will illustrate this
in Example~\ref{Example} below.

\medskip

\noindent
In the special case where $\Dom = \Dom_{\min} = x^{\gamma+m}H^m_b(\Mbar;E)$,
Theorem~\ref{MainTheorem} was proved by Egorov, Kondratiev, and Schulze in
\cite{EgorovKondratievSchulze1}. The following example illustrates why
the general result is relevant.
Further information pertaining to this example can be found in
\cite{GiMe01} (as far as the Friedrichs domain is concerned), and, in particular,
in \cite{GKM3}.

\begin{example}\label{Example}
Let $\Mbar$ be a smooth compact $2$-manifold with boundary $Y = {\mathbb S}^1$.
Fix a collar neighborhood map $U \cong Y \times [0,\varepsilon)$ of the boundary,
and a defining function $x$ for $Y$ that coincides in $U$ with the projection to
the coordinate in $[0,\varepsilon)$.
Let ${}^cg$ be a Riemannian metric on $\open \Mbar$ that in the splitting of variables
$(y,x) \in Y\times[0,\varepsilon)$ near the boundary takes the form
${}^cg = dx^2 + x^2g_Y(x)$ for a smooth family of metrics $g_Y(x)$ on $Y$ up to $x = 0$,
and assume that $g_Y(0)$ is the standard round metric on ${\mathbb S}^1$.

${}^cg$ is a special $c$-metric as was discussed at the beginning of
Section~\ref{ConeOperators}, and the positive Laplacian $\Delta = \Delta_{{}^cg} \in
x^{-2}\Diff^2_b(\Mbar)$ is a cone differential operator. Its $c$-principal symbol
$\csym(\Delta)$ is the metric induced by ${}^cg$ on $\cT^*\Mbar$. Consequently,
$\csym(\Delta)-\lambda$ is invertible for all $\lambda \notin \overline{\R}_+$,
i.e., $\Delta$ is $c$-elliptic with parameter $\lambda \in \Gamma$ for \emph{all}
rays $\Gamma \neq \overline{\R}_+$.

The geometric $L^2$-space with respect to the metric ${}^cg$ is the space
$x^{-1}L^2_b(\Mbar)$, and we consider $\Delta$ an unbounded operator
$$
\Delta : C_c^{\infty}(\open\Mbar) \subset x^{-1}L^2_b(\Mbar) \to x^{-1}L^2_b(\Mbar).
$$
$\Delta$ has infinitely many selfadjoint and infinitely many nonselfadjoint
closed extensions.
In fact, $\dim\Dom_{\max}/\Dom_{\min} = 2$, $\Ind\Delta_{\min} = -1$, and
$\Ind\Delta_{\max} = 1$. The domains $\Dom$ of closed extensions of $\Delta$
such that $\Ind\Delta_{\Dom} = 0$ are the ones with $\dim\Dom/\Dom_{\min} = 1$.
Using Theorem~\ref{MainTheorem}, we will proceed to argue that the system of
generalized eigenfunctions of $\Delta_{\Dom}$ is complete in $x^{-1}L^2_b(\Mbar)$
for all domains $\Dom$ with $\dim\Dom/\Dom_{\min} = 1$. In particular,
this includes all selfadjoint extensions (where the statement is trivial in view
of the spectral theorem), but also infinitely many more nonselfadjoint extensions
of $\Delta$.

The normal operator $\Delta_{\wedge}$ associated to $\Delta_{{}^cg}$ on
$Y^{\wedge} = {\mathbb S}^1 \times \overline{\R}_+$ is the positive Laplacian with
respect to the metric $dx^2 + x^2g_Y(0)$. In other words, it is the standard positive
Laplacian in $\R^2\setminus\{0\}$ in polar coordinates. Correspondingly, the space
$x^{-1}L^2_b(Y^{\wedge})$ is just the standard $L^2$-space on
${\mathbb R}^2\setminus\{0\}$ with respect to Lebesgue measure, written in polar
coordinates. We have
$$
\Dom_{\wedge,\max} = \Dom_{\wedge,\min} \oplus \Span\{\omega,\omega\log x\},
$$
where $\omega \in C_c^{\infty}(\overline{\R}_+)$ is a cut-off function near zero.
This gives an isomorphism
$$
\Dom_{\wedge,\max}/\Dom_{\wedge,\min} \cong \Span\{1,\log x\},
$$
and the action $\kappa_{\varrho}$ from \eqref{kappaaction} that is
induced on $\Dom_{\wedge,\max}/\Dom_{\wedge,\min}$ is given by
$\kappa_{\varrho}1 = 1$ and $\kappa_{\varrho}\log x =
\log(\varrho)\cdot 1 + \log x$ on the basis elements under this isomorphism.

Now let $\Dom_{\wedge}$ be any domain for $\Delta_{\wedge}$ with
$\dim\Dom_{\wedge}/\Dom_{\wedge,\min} = 1$. Then $\Dom_{\wedge}/\Dom_{\wedge,\min}$
corresponds to $\Span\{a\cdot 1 + b\cdot\log x\}$ for some $(a,b) \neq (0,0)$.
$\kappa_{\varrho}$ induces a flow on the Grassmannian of all subspaces
$\tilde{\Dom}_{\wedge}/\Dom_{\wedge,\min}$ of $\Dom_{\wedge,\max}/\Dom_{\wedge,\min}$
with $\dim\tilde{\Dom}_{\wedge}/\Dom_{\wedge,\min} = 1$. In that Grassmannian we have
with the obvious identifications as $\varrho \to 0$
\begin{gather*}
\kappa_{\varrho}\bigl(\Dom_{\wedge}/\Dom_{\wedge,\min}\bigr) =
\Span\{(a+b\log(\varrho))\cdot 1 + b\cdot \log x\}  \\
= \Span\{1 + \frac{b}{a+b\log(\varrho)}\cdot \log x\}
\underset{\varrho \to 0}{\longrightarrow}
\Span\{1\} = \Dom_{\wedge,F}/\Dom_{\wedge,\min},
\end{gather*}
where $\Dom_{\wedge,F}$ is the domain of the Friedrichs extension of $\Delta_{\wedge}$.
This shows that $\Omega^{-}(\Dom_{\wedge}) = \{\Dom_{\wedge,F}\}$ for any domain
$\Dom_{\wedge}$ with $\dim\Dom_{\wedge}/\Dom_{\wedge,\min} = 1$.
Because $\Delta_{\wedge} - \lambda : \Dom_{\wedge,F} \to x^{-1}L^2_b(Y^{\wedge})$
is invertible for all $\lambda \notin \overline{\R}_+$, we conclude that \emph{all}
rays $\Gamma \neq \overline{\R}_+$ are rays of minimal growth for \emph{all} extensions
of $\Delta_{\wedge}$ with domains $\Dom_{\wedge}$ such that
$\dim\Dom_{\wedge}/\Dom_{\wedge,\min} = 1$.

The arguments above now show that Theorem~\ref{MainTheorem} is applicable for all
closed extensions $\Delta_{\Dom}$ in $x^{-1}L^2_b(\Mbar)$ for
\emph{all} domains $\Dom_{\min} \subset \Dom \subset \Dom_{\max}$ with
$\dim\Dom/\Dom_{\min} = 1$. Hence the system of generalized eigenfunctions of
$\Delta_{\Dom}$ is complete in $x^{-1}L^2_b(\Mbar)$ for all these extensions.

This example is clearly not covered by \cite{EgorovKondratievSchulze1}:
Because $\Ind\Delta_{\min} = -1$, the minimal extension of the Laplacian does
not admit any rays of minimal growth. Likewise, $\Delta_{\wedge,\min}$ does
not admit any rays of minimal growth.
Moreover, in this example we also have $x^{1}H^2_b(\Mbar) \subsetneq \Dom_{\min}$
(and the former is of infinite codimension in the latter), which shows that the
scale of weighted $b$-Sobolev spaces that is widely used in the literature on
cone operators cannot be expected to fit into the natural functional analytic
framework of domains of closed extensions.
\end{example}

\begin{theorem}\label{MainTheoremBVP}
Let $\Mbar$ be a compact $n$-manifold with corners of codimension two,
$\partial\Mbar = \partial_{\reg}\Mbar \cup \partial_{\sing}\Mbar$. Let $x$
be a defining function for $\overline{Y} = \partial_{\sing}\Mbar$, and let
$A \in x^{-m}\Diff_b^m(\Mbar;E)$, $m > 0$. Let $T$ be a vector of boundary
conditions for $A$ associated with $\partial_{\reg}\Mbar$.
Fix a weight $\gamma \in \R$, and consider the realization
$$
A_{T,\Dom} : \Dom \subset x^{\gamma}L^2_b(\Mbar;E) \to x^{\gamma}L^2_b(\Mbar;E)
$$
of $A$ subject to $Tu = 0$ on $\partial_{\reg}\Mbar$ with domain
$\Dom_{\min}(A_T) \subset \Dom \subset \Dom_{\max}(A_T)$.
We assume that there are rays
$$
\Gamma_j = \{re^{i\theta_j};\; r \geq 0\}, \quad j = 1,\ldots,J,
$$
in the complex plane such that all angles enclosed by any two adjacent rays are
$\leq \frac{\pi m}{n}$, and such that for any such ray $\Gamma$,
\begin{itemize}
\item $\csym(A) - \lambda$ is invertible on
$\bigl(\cT^*\Mbar\times\Gamma\bigr)\setminus 0$;
\item
$$
\begin{pmatrix} \csym_{\partial}(A) - \lambda \\ \csym_{\partial}(T) \end{pmatrix} :
{}^c{\mathscr S}_+ \otimes \cpi^*E|_{\partial_{\reg}\Mbar} \to
\begin{array}{c}
{}^c{\mathscr S}_+ \otimes \cpi^*E|_{\partial_{\reg}\Mbar} \\
\oplus \\
\bigoplus_{j=1}^N \cpi^*F_j|_{\partial_{\reg}\Mbar}
\end{array}
$$
is invertible on $\bigl(\cT^*\partial_{\reg}\Mbar \times \Gamma\bigr) \setminus 0$,
where $\cpi : \cT^*\partial_{\reg}\Mbar \to \partial_{\reg}\Mbar$ is the canonical
projection;
\item $\Gamma$ is a ray of minimal growth for the realization
$$
A_{\wedge,T_{\wedge}} : \Dom_{\wedge} \subset x^{\gamma}L^2_b(\overline{Y}^{\wedge};E)
\to x^{\gamma}L^2_b(\overline{Y}^{\wedge};E)
$$
of $A_{\wedge}$ subject to $T_{\wedge}u = 0$ with the associated domain
$\Dom_{\wedge}$ to $\Dom$ according to \eqref{assocdomain1}.
\end{itemize}
Then the system of generalized eigenfunctions of $A_{T,\Dom}$ is complete in
$x^{\gamma}L^2_b(\Mbar;E)$.
\end{theorem}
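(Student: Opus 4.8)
The plan is to derive Theorem~\ref{MainTheoremBVP} exactly as Theorem~\ref{MainTheorem} is obtained, namely by verifying the two hypotheses of the abstract functional-analytic result Theorem~\ref{AbstractResult} and then invoking it. The operator in question is $A_{T,\Dom}$ acting in the Hilbert space $H = x^{\gamma}L^2_b(\Mbar;E)$ with domain $\Dom$ carrying the graph norm; it is closed and densely defined by the discussion following Theorem~\ref{RayofminimalgrowthBVP}. The two things that must be checked are: (i) the embedding $\Dom \hookrightarrow H$ is of Schatten class $\mathfrak{S}_p$ for a suitable $p$, and (ii) there are finitely many rays, pairwise enclosing angles $\leq \pi/p$, that are rays of minimal growth for $A_{T,\Dom}$.

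First I would dispose of the embedding condition. The hypotheses \eqref{AssumptionsBVPs} and \eqref{ParamEllSingBdry} of Theorem~\ref{RayofminimalgrowthBVP} are precisely what the three bulleted assumptions of the present theorem assert \emph{for each of the given rays $\Gamma_j$} (the first two bullets reproduce \eqref{AssumptionsBVPs}, the third reproduces \eqref{ParamEllSingBdry}). Hence Corollary~\ref{EmbeddingDomainsBVP} applies and yields that $\Dom \hookrightarrow x^{\gamma}L^2_b(\Mbar;E)$ belongs to $\mathfrak{S}^+_{n/m} = \bigcap_{q > n/m}\mathfrak{S}_q$. In particular $\Dom \hookrightarrow H$ lies in $\mathfrak{S}_q$ for every $q > n/m$.

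Next I would address the rays. Applying Theorem~\ref{RayofminimalgrowthBVP} to each $\Gamma_j$ separately (the hypotheses for that theorem hold for every one of the rays by assumption), each $\Gamma_j$ is a ray of minimal growth for $A_{T,\Dom}$. Thus we have $J$ rays of minimal growth with the property that any two adjacent ones enclose an angle $\leq \frac{\pi m}{n}$. I would then invoke the openness observation recorded after Theorem~\ref{AbstractResult}: around each ray of minimal growth there is an open sector all of whose rays are again rays of minimal growth, so the embedding need only lie in $\mathfrak{S}^+_p$ with $p = n/m$ rather than in $\mathfrak{S}_p$ itself. Since $\frac{\pi m}{n} = \pi/p$ with $p = n/m$, the angle condition in Theorem~\ref{AbstractResult} is exactly met. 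Concretely: pick any $q$ slightly larger than $n/m$, small enough that $\frac{\pi m}{n} < \pi/q$ still would fail — instead, use the sectors: enlarge each $\Gamma_j$ to a thin sector of rays of minimal growth and replace $\Gamma_j$ by a ray $\Gamma_j'$ inside that sector, chosen so that adjacent $\Gamma_j'$ enclose angles strictly less than $\pi m/n$; then for $q$ close enough to $n/m$ these angles are $\leq \pi/q$, and the embedding is in $\mathfrak{S}_q$. Either way, Theorem~\ref{AbstractResult} applies with that $q$ and concludes that the system of generalized eigenvectors of $A_{T,\Dom}$ is complete in $x^{\gamma}L^2_b(\Mbar;E)$.

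The proof is essentially an assembly of already-established pieces, so there is no genuine obstacle; the only point requiring a moment's care is the bookkeeping with the Schatten index, since Corollary~\ref{EmbeddingDomainsBVP} gives membership in $\mathfrak{S}^+_{n/m}$ but not in $\mathfrak{S}_{n/m}$ on the nose, and one must use the sector-stability of rays of minimal growth to trade the critical angle $\pi m/n$ against an index $q$ marginally above $n/m$ — exactly the reduction already explained in Section~\ref{FunctAnalResult} after Theorem~\ref{AbstractResult}. Once that is observed the conclusion is immediate, and I would phrase the proof as: ``This follows from Theorem~\ref{AbstractResult} together with Theorem~\ref{RayofminimalgrowthBVP} and Corollary~\ref{EmbeddingDomainsBVP}, arguing as in the proof of Theorem~\ref{MainTheorem}.''
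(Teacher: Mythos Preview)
Your proposal is correct and follows exactly the approach the paper takes: the paper does not give a separate formal proof of Theorem~\ref{MainTheoremBVP} but states at the start of Section~\ref{MainTheoremExamples} that it (like Theorem~\ref{MainTheorem}) is obtained by combining Theorem~\ref{AbstractResult} with the results of Section~\ref{ConeOperators}, namely Theorem~\ref{RayofminimalgrowthBVP} and Corollary~\ref{EmbeddingDomainsBVP}, together with the $\mathfrak{S}^+_p$ refinement discussed after Theorem~\ref{AbstractResult}. Your handling of the Schatten-index bookkeeping is precisely that refinement.
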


In the special case where
$$
\Dom = \Dom_{\min}(A_T) = \{u \in x^{\gamma+m}H^m_b(\Mbar;E);\; Tu = 0\},
$$
Theorem~\ref{MainTheoremBVP} was obtained by Egorov, Kondratiev, and Schulze in
\cite{EgorovKondratievSchulze2}. The following example illustrates why
the result is relevant in the general case.

\begin{example}
Let $\overline{\Omega} \subset \R^2$ be a bounded domain. We assume that
$\partial\overline{\Omega}\setminus\{0\}$ is $C^{\infty}$, and that the point $0$ is
an angular singularity. More specifically, after rotation, we assume that there is an
angular domain $V = \{z \in \C;\; z = xe^{i\theta},\;x \geq 0,\;
0 \leq \theta \leq \alpha\}$, where $0 < \alpha < 2\pi$, such that
there exists an $\varepsilon > 0$ with
$B_{\varepsilon}(0)\cap\overline{\Omega} = B_{\varepsilon}(0)\cap V$.

In $\overline{\Omega}$ we consider the positive Laplacian
$\Delta = D_{x_1}^2 + D_{x_2}^2$ subject to homogeneous Dirichlet boundary conditions
on $\partial\overline{\Omega}\setminus\{0\}$. We are interested in closed
extensions of this operator in $L^2(\overline{\Omega})$.

By introducing polar coordinates $(x,\theta)$ near $0$, where $x \geq 0$ and
$0 \leq \theta \leq \alpha$, we blow up the origin and obtain a manifold
$\Mbar$ with corners of codimension two. The blow-down map takes
$\Mbar \to \overline{\Omega}$, $\partial_{\sing}\Mbar \to 0$, and
$\partial_{\reg}\Mbar\setminus\partial_{\sing}\Mbar \to
\partial\overline{\Omega}\setminus\{0\}$.

$\Delta$ induces a cone operator on $\Mbar$, and the boundary condition is
the homogeneous Dirichlet boundary condition on $\partial_{\reg}\Mbar$.
The radial variable $x$ gives rise to a defining function for $\partial_{\sing}\Mbar$.
Near $\partial_{\sing}\Mbar$, we have
$\Delta = x^{-2}\bigl((xD_x)^2 + D_{\theta}^2\bigr)$.
We will henceforth write $\Delta_{\textup{Dir}}$ for this operator to emphasize
that it is equipped with Dirichlet boundary conditions.

We consider $\Delta_{\textup{Dir}}$ an unbounded operator in $x^{-1}L^2_b(\Mbar)$.
Observe that the blow-down map takes this space to $L^2(\overline{\Omega})$, the
space we are interested in.

The wealth of extensions of $\Delta_{\textup{Dir}}$ depends strongly on the
angle $\alpha$. More precisely, if $0 < \alpha < \pi$, then
$$
\Dom_{\min}(\Delta_{\textup{Dir}}) = \Dom_{\max}(\Delta_{\textup{Dir}}) =
\{u \in x^1H^2_b(\Mbar);\; u = 0 \textup{ on } \partial_{\reg}\Mbar\}.
$$
If $\alpha = \pi$ (the case when the entire boundary of $\overline{\Omega}$
is smooth), then still
$$
\Dom_{\min}(\Delta_{\textup{Dir}}) = \Dom_{\max}(\Delta_{\textup{Dir}}) \widehat{=}
H^2(\overline{\Omega})\cap H^1_0(\overline{\Omega}),
$$
but this space contains $\{u \in x^1H^2_b(\Mbar);\; u = 0 \textup{ on }
\partial_{\reg}\Mbar\}$ as a proper subspace of infinite codimension. This provides
another simple example that shows that the scale of weighted $b$-Sobolev spaces does not
necessarily fit into the natural functional analytic framework of domains of closed
extensions of an operator.

Consequently, whenever $0 < \alpha \leq \pi$, we have
$\Dom_{\min}(\Delta_{\textup{Dir}}) = \Dom_{\max}(\Delta_{\textup{Dir}})$,
the domain of the Friedrichs extension of the Laplacian. Thus the span of the
eigenfunctions is dense by the spectral theorem.

The situation is more interesting for $\pi < \alpha < 2\pi$. In this case,
$$
\Dom_{\max}(\Delta_{\textup{Dir}}) = \Dom_{\min}(\Delta_{\textup{Dir}}) \oplus
\Span\{\omega(x) \varphi(\theta)x^{\pi/\alpha},\omega(x)\varphi(\theta)x^{-\pi/\alpha}\},
$$
where $\omega \in C_c^{\infty}(\overline{\R}_+)$ is a cut-off function supported near
the origin, and $\varphi(\theta) = \sin\bigl((\pi/\alpha)\theta\bigr)$
is an eigenfunction of $D_{\theta}^2$ to the eigenvalue $(\pi/\alpha)^2$ on the
interval $[0,\alpha]$ subject to Dirichlet boundary conditions (we are using polar
coordinates here as above).
Similarly to Example~\ref{Example}, we will show using Theorem~\ref{MainTheoremBVP}
that the system of generalized eigenfunctions of $\Delta_{\textup{Dir}}$ is complete
in $x^{-1}L^2_b(\Mbar)$ for all domains $\Dom \subset \Dom_{\max}(\Delta_{\textup{Dir}})$
such that $\dim\Dom/\Dom_{\min}(\Delta_{\textup{Dir}}) = 1$.
This includes infinitely many selfadjoint and, most importantly, nonselfadjoint
extensions where the statement is nontrivial.

Clearly, $\Delta_{\textup{Dir}}$ is $c$-elliptic with parameter $\lambda \in \Gamma$
for all rays $\Gamma \neq \overline{\R}_+$, and, likewise, the $c$-principal boundary
symbol with parameter $\lambda \in \Gamma$ is invertible for all these rays
$\Gamma$. In other words, the first two bulleted assumptions of
Theorem~\ref{MainTheoremBVP} are satisfied for $\Delta_{\textup{Dir}}$ for all
rays $\Gamma \neq \overline{\R}_+$. In order to apply Theorem~\ref{MainTheoremBVP},
we need to check the remaining assumptions on the normal operator.
The normal operator is the positive Dirichlet Laplacian $\Delta_{\wedge,\textup{Dir}}$ on
the angular domain $V$, written in polar coordinates. The $L^2$-realizations satisfy
$$
\Dom_{\wedge,\max}(\Delta_{\wedge,\textup{Dir}}) =
\Dom_{\wedge,\min}(\Delta_{\wedge,\textup{Dir}}) \oplus
\Span\{\omega(x) \varphi(\theta)x^{\pi/\alpha},\omega(x)\varphi(\theta)x^{-\pi/\alpha}\}
$$
as above. This induces an isomorphism
$$
\Dom_{\wedge,\max}/\Dom_{\wedge,\min} \cong 
\Span\{\varphi(\theta)x^{\pi/\alpha},\varphi(\theta)x^{-\pi/\alpha}\},
$$
and the scaling action $\kappa_{\varrho}$ on $\Dom_{\wedge,\max}/\Dom_{\wedge,\min}$
takes the form
$$
\kappa_{\varrho}\bigl(\varphi(\theta)x^{\pi/\alpha}\bigr) =
\varrho^{\pi/\alpha}\cdot\varphi(\theta)x^{\pi/\alpha}
\textup{ and }
\kappa_{\varrho}\bigl(\varphi(\theta)x^{-\pi/\alpha}\bigr) =
\varrho^{-\pi/\alpha}\cdot\varphi(\theta)x^{-\pi/\alpha}
$$
on the basis elements in the image of this isomorphism. Choose an arbitrary domain
$\Dom_{\wedge} \subset \Dom_{\wedge,\max}$ with
$\dim\Dom_{\wedge}/\Dom_{\wedge,\min} = 1$. $\Dom_{\wedge}$ is represented by
$\Span\{a\cdot\varphi(\theta)x^{\pi/\alpha} + b\cdot\varphi(\theta)x^{-\pi/\alpha}\}$
for some $(a,b) \neq (0,0)$. In the Grassmannian of $1$-dimensional subspaces of
$\Dom_{\wedge,\max}/\Dom_{\wedge,\min}$ we get
\begin{gather*}
\kappa_{\varrho}\bigl(\Dom_{\wedge}/\Dom_{\wedge,\min}\bigr)
= \Span\{\varrho^{\pi/\alpha}a\cdot\varphi(\theta)x^{\pi/\alpha} +
\varrho^{-\pi/\alpha}b\cdot\varphi(\theta)x^{-\pi/\alpha}\} \\
= \Span\{\varrho^{2\pi/\alpha}a\cdot\varphi(\theta)x^{\pi/\alpha} +
b\cdot\varphi(\theta)x^{-\pi/\alpha}\}
\underset{\varrho \to 0}{\longrightarrow}
\begin{cases}
\Span\{\varphi(\theta)x^{\pi/\alpha}\} & \textup{ if } b = 0, \\
\Span\{\varphi(\theta)x^{-\pi/\alpha}\} & \textup{ if } b \neq 0.
\end{cases}
\end{gather*}
It is easy to see that both domains $\Dom_{\wedge,\pm\alpha} =
\Dom_{\wedge,\min}(\Delta_{\wedge,\textup{Dir}})
\oplus\Span\{\omega(x)\varphi(\theta)x^{\pm\pi/\alpha}\}$ are selfadjoint for
$\Delta_{\wedge,\textup{Dir}}$. Consequently, every ray $\Gamma \subset \C$
not parallel to the real line is a ray of minimal growth for
$\Delta_{\wedge,\textup{Dir}}$ for all domains $\Dom_{\wedge} \subset
\Dom_{\wedge,\max}$ with $\dim\Dom_{\wedge}/\Dom_{\wedge,\min} = 1$.
The reasoning here is completely analogous to Example~\ref{Example}.

Theorem~\ref{MainTheoremBVP} now applies, and we conclude that the system of
generalized eigenfunctions of $\Delta_{\textup{Dir}}$ is complete
in $x^{-1}L^2_b(\Mbar)$ for all domains $\Dom \subset \Dom_{\max}(\Delta_{\textup{Dir}})$
with $\dim\Dom/\Dom_{\min}(\Delta_{\textup{Dir}}) = 1$ as was claimed.
\end{example}


\end{document}